\documentclass[reqno,12pt]{amsart}
\usepackage{dsfont}
\usepackage{amssymb,mathtools,psfrag,graphicx}
\usepackage{epstopdf} % epstopdf is for texshop with pdf script,
                                     % or change script to latex+gs
%\usepackage[brazil]{babel}

\usepackage[colorlinks=true]{hyperref}
\hypersetup{urlcolor=blue, citecolor=red}

\usepackage[OT1]{fontenc}
\usepackage[latin1]{inputenc}
  % Use [latin1] num PC ou [applemac] num Macintosh

\newcommand{\be}{\begin{equation}}
\newcommand{\ee}{\end{equation}}
\newcommand{\bea}{\begin{eqnarray}}
\newcommand{\eea}{\end{eqnarray}}
\newcommand{\bu}{\mathbf u}
\newcommand{\bv}{\mathbf v}
\newcommand{\bw}{\mathbf w}

\newcommand{\f}{\mathbf f}
\newcommand{\bfe}{\mathbf e}
\newcommand{\fKU}{\overline{\Pi_{t_0}^{-1}K\cap\mathcal{U}}}
\newcommand{\fKUI}{\overline{\Pi_{t_0}^{-1}K\cap\mathcal{U}_I}}
\newcommand{\KU}{\Pi_{t_0}^{-1}K\cap\mathcal{U}}
\newcommand{\KUI}{\Pi_{t_0}^{-1}K\cap\mathcal{U}_I}

\newcommand{\mA}{\mathcal A}
\newcommand{\mB}{\mathcal B}
\newcommand{\mC}{\mathcal C}
\newcommand{\mD}{\mathcal D}

\newcommand{\mK}{\mathcal K}
\newcommand{\mM}{\mathcal M}

\newcommand{\mP}{\mathcal P}

\newcommand{\mU}{\mathcal U}
\newcommand{\mV}{\mathcal V}
\newcommand{\mW}{\mathcal W}
\newcommand{\mX}{\mathcal X}

\newcommand{\wconv}{\stackrel{wsc}{\rightharpoonup}}
\newcommand{\rd}{{\text{\rm d}}}
\newcommand{\Cloc}{\mathcal{C}_{\text{loc}}}
\DeclareMathOperator{\Exp}{e}
%\renewcommand{T_{b,\varepsilon}}{T_b}
%The Layout
\setlength{\oddsidemargin}{0pt}
\setlength{\evensidemargin}{0pt}
\setlength{\textheight}{42\baselineskip}
\setlength{\textwidth}{6in}
%\addtolength{\headheight}{0.7in}

\theoremstyle{plain}% default
\newtheorem{thm}{Theorem}[section]
\newtheorem{lem}{Lemma}[section]
\newtheorem{prop}{Proposition}[section]

\newtheorem{defs}{Definition}[section]
\newtheorem{prob}{Problem}[section]
\theoremstyle{definition}
\newtheorem{rmk}{Remark}[section]

%%Comando para comentários%%%%%%

\newcommand{\comment}[1]{\\\indent\textcolor{red}{\framebox{\parbox{0.9\textwidth}{\textbf{#1}}}}\\}

%Para desativar (ativar), retirar (colocar) o símbolo '%' da (na) linha abaixo

\renewcommand{\comment}[1]{}

\begin{document}

\title[Trajectory statistical solutions for 3D-NSE-like systems]{Trajectory Statistical Solutions for three-dimensional Navier-Stokes-like systems}
\author{Anne C. Bronzi}
\author{Cecilia F. Mondaini}
\author{Ricardo M. S. Rosa}
\address{Instituto de Matem\'atica, Universidade Federal do Rio de Janeiro, \\
C.P. 68530, 21.941-909, Rio de Janeiro, RJ, Brazil}

\email[A. Bronzi]{annebronzi@gmail.com}
\email[C. Mondaini]{cfmondaini@gmail.com}
\email[R. Rosa]{rrosa@im.ufrj.br}

\date{July 23, 2013}

\thanks{This work was partly supported by CNPq-Brazil, under the grant ``Edital Universal'' 500437/2010-6. The three authors were also individually supported by scholarships from CNPq-Brazil.}

\subjclass[2010]{Primary: 76D06, 35Q35; Secondary: 35Q30, 60B05}
\keywords{statistical solutions, trajectory statistical solutions, Navier-Stokes equations, B\'enard problem, thermohydraulics}

\begin{abstract}
A general framework for the theory of statistical solutions on trajectory spaces is constructed for a wide range of equations involving incompressible viscous flows. This framework is constructed with a general Hausdorff topological space as the phase space of the system, and with the corresponding set of trajectories belonging to the space of continuous paths in that phase space. A trajectory statistical solution is a Borel probability measure defined on the space of continuous paths and carried by a certain subset which is interpreted, in the applications, as the set of solutions of a given problem. The main hypotheses for the existence of a trajectory statistical solution concern the topology of that subset of ``solutions'', along with conditions that characterize those solutions within a certain larger subset (a condition related to the assumption of strong continuity at the origin for the Leray-Hopf weak solutions in the case of the Navier-Stokes and related equations). The aim here is to raise the current theory of statistical solutions to an abstract level that applies to other evolution equations with properties similar to those of the three-dimensional Navier-Stokes equations. The applicability of the theory is illustrated with the B\'enard problem of convection in fluids.
\end{abstract}

% \pacs{87.15.Vv, 05.40.-a, 87.15.A-}

\maketitle

\section{Introduction}

The concept of statistical solutions has emerged in the context of fluid dynamics in order to provide a rigorous mathematical definition for the notion of ensemble average, commonly used in the study of turbulent flows. In such flows, the relevant physical quantities (e.g., velocity, kinetic energy, and pressure) present a wild variation in space and time, characterizing a highly irregular and unpredictable behavior. Nevertheless, those quantities display a regular behavior when considered with respect to some average. In an attempt to investigate general properties of such flows, one is then naturally led to deal with averages of the desired quantities. Several types of averages are usually considered, such as locally in space, locally in time, and with respect to an ensemble of experiments. Statistical solutions are directly related to this latter notion of average, known as ensemble average.

In the 1970's, two main definitions of statistical solutions for the Navier-Stokes equations have been developed. First, Foias \cite{Foias72} introduced the notion of statistical solutions in the phase space, consisting of a family of measures parametrized by the time variable representing the evolution of probability distributions of a viscous incompressible fluid. A few years later, Vishik and Fursikov \cite{VF77} introduced a different notion of statistical solutions, based on a single measure defined on the space of trajectories. More recently, Foias, Rosa and Temam \cite{FRT2010,FRT} (see also \cite{FMRT}) introduced a slightly modified definition of this latter solution, inspired by the definition given in \cite{VF77}, and which was denoted as Vishik-Fursikov measure, for a measure defined in the space of trajectories. Projecting this measure to the phase space at each time, they obtained a particular type of statistical solution, which is termed a Vishik-Fursikov statistical solution. What is interesting about this new definition is that every Vishik-Fursikov statistical solution is a statistical solution in the sense of Foias-Prodi. Besides being more favorable to analysis, the former seems to possess additional properties.

Since its initial steps, the theory of statistical solutions has gone through a significant development, becoming a subject that encompasses a number of concepts from several different areas of Mathematics and with a growing number of applications \cite{Foias74,CFM94,Bercovicietal95,Foias97,Fursikov99,FMRT01,RRT08, BronziRosa2014}. The idea in our work is to extend this theory to a more abstract level, so that similar results could be obtained for other equations that share the same potential pathologies of the Navier-Stokes equations (e.g., the lack of a uniqueness result and some peculiar properties of the Leray-Hopf weak solutions). We aimed at extracting the key ideas given in \cite{FRT2010,FRT} and adapt them to a framework as general as possible. In this sense we are very much indebted to the previous fundamental works \cite{Foias72, VF77} and, more recently, \cite{FRT2010,FRT}.

This is in fact a bold idea encompassing several difficult parts. The current work can be viewed as the first part in this larger project. Here, we focus only on the concept of Vishik-Fursikov measures defined in \cite{FRT2010,FRT}, calling them trajectory statistical solutions, and adapting them to an abstract level suitable to a wide range of applications. We are also working on the corresponding result for statistical solutions in phase space, generalizing the notion of Vishik-Fursikov statistical solution defined in \cite{FRT2010,FRT}. This result will be presented elsewhere. Applications of this general framework for the convergence of statistical solutions for models depending on a parameter are also under development.

We now give an outline of the present work: we start by defining a general notion of statistical solutions that incorporates the idea given in \cite{FRT2010,FRT} of a Vishik-Fursikov measure (Definition \ref{def-stat-sol}). This definition is built over an abstract framework based on a general Hausdorff topological space $X$ and the associated space of continuous paths $\mC(I,X)$, over a given time interval $I\subset \mathds{R}$ (i.e. the space of continuous functions defined on a real interval $I$ and assuming values in $X$), and endowed with the compact-open topology. A key object in this theory is a subspace $\mU$ of $\mC(I,X)$, which has no special meaning in this abstract level, but which, in the applications, is taken to be the set of (individual) solutions of a given evolutionary system, for which $X$ is a phase space. A general concept of statistical solution is then defined with respect to this set $\mU$ and called a $\mU$-trajectory statistical solution, which consists of a tight Borel probability measure on the space of continuous paths in $X$ and carried by $\mU$.

Our main concern is to prove the existence of a trajectory statistical solution for a certain initial value problem (Problem \ref{initvalueprob}). Although there is no equation at the abstract level, we consider an interval $I$ closed and bounded at the left and which is interpreted as a time interval. Then, a trajectory statistical solution is sought as a measure in $\mC(I,X)$ which is carried by $\mU$ and such that its projection at the left end point of $I$ is equal to a given ``initial'' Borel probability measure defined on $X$.

In order to obtain an existence theorem for this abstract initial-value problem, a series of restrictions must be imposed on the set $\mU$ that mimic some essential properties of the space of solutions occurring in the applications (Definition \ref{hypothesisH}). Under those hypotheses, we prove the existence of a $\mU$-trajectory statistical solution for the initial value problem for any initial tight Borel probability measure on $X$ (Theorem \ref{existence}).

The trajectory statistical solution of our initial value problem is obtained through the limit of an approximating net of measures, so that one of the tools we need is a compactness result for measures. A result of this kind which is suitable for our abstract context was developed by Topsoe \cite{Topsoe,Topsoebook,Topsoe2} in his works on a generalization of Prohorov's Theorem (see \cite{Prohorov56}). One of his results states that the uniform tightness of a family of tight Borel probability measures defined on a general Hausdorff space implies that the family is compact with respect to a certain topology which is stronger than the classical weak-star topology for measures. This stronger topology is based on semi-continuity, rather than continuity (see Section \ref{sectopmeasures}), and is not strictly necessary, but it is a more general and stronger result and it simplifies our presentation.

This abstract formulation applies to three-dimensional Navier-Stokes equations considered in the original works and extends to a large number of equations involving incompressible viscous fluids. We illustrate this fact by applying this theory to the well-known B\'enard problem of convection in fluids, whose determining equations consists of a particular coupling of the Navier-Stokes equations with an equation for the temperature.

\section{Basic Tools}

We present below the concepts and results that form the mathematical background of our work. In the first subsection, we present our basic functional space and define some useful operators. The second subsection is dedicated to the introduction of the measure spaces we work on and of the notion of \emph{tightness}. Finally, the third one is concerned with the definition of a topology for these measure spaces, which is needed in order to make sense of the convergence of nets. Most of the ideas in this latter subsection were introduced by Topsoe \cite{Topsoe,Topsoebook,Topsoe2}.

\subsection{Functional and topological preliminaries}

Throughout this work we consider a Hausdorff topological space $X$ and an arbitrary interval
$I\subset\mathbb{R}$. Denote by $\mX=\Cloc(I,X)$ the \emph{space of continuous paths} in $X$ endowed with the \emph{compact-open topology}, i.e. the space of continuous functions from $I$ into $X$ with the topology generated by the subbase consisting of the subsets
\begin{equation}
  \label{defSJU}
  S(J,U)=\{u\in\mX\,|\,u(J)\subset U\},
\end{equation}
where $J\subset I$ is a compact subinterval and $U\subset X$ is an open set. With respect to this topology, $\mX$ is a Hausdorff topological vector space.

The subscript ``\emph{loc}'' in $\Cloc(I,X)$ refers to the fact that this topology considers compact sets in $I$. When $X$ is a uniform space, the compact-open topology in $\mX$ coincides with the \emph{topology of uniform convergence on compact subsets} \cite[Theorem 7.11]{Kelley75}. This holds, in particular, when $X$ is a topological vector space, which is often the case in applications, such as the one that we present in Section \ref{secapplication}.

For any $t\in I$, let $\Pi_{t}:\mX\rightarrow X$ be the ``projection'' map at time $t$ defined by
\[\Pi_{t}u = u(t), \quad \forall u\in\mX.\]
%Also, for any subinterval $J\subset I$, we define the ``projection'' $\Pi_J:\mX\rightarrow \mC(J,X)$ on $J$ as
%\[\Pi_J u = u|_J, \quad \forall u\in\mX.\]
%It is not difficult to check that both $\Pi_{t}$ and $\Pi_J$ are continuous with respect to the compact-open topology.
It is not difficult to check that $\Pi_{t}$ is continuous with respect to the compact-open topology.

\subsection{Measures and tightness}

Let $\mB(X)$ be the $\sigma$-algebra of Borel sets in $X$. We denote by $\mathcal{M}(X)$ the set of finite Borel measures in $X$, i.e. the set of measures $\mu$ defined on $\mB(X)$ such that $\mu(X)<\infty$. The subset of $\mM(X)$ consisting of Borel probability measures is denoted by $\mathcal{P}(X)$.

A \emph{carrier} of a measure is any measurable subset of full measure, i.e. such that its complement has null measure. If $C$ is a carrier for a measure $\mu$, we say that $\mu$ is \emph{carried} by $C$. If the carrier is a single point $x\in X$, the probability measure is a \emph{Dirac measure} and is denoted $\delta_x$. A probability measure that can be written as a convex combination of Dirac measures is called a \emph{discrete measure}.

We say that a Borel measure $\mu$ on $X$ is \textit{tight} if, for every $A\in\mathcal{B}(X)$,
\[\mu(A)=\sup\{\mu(K)\,|\,K\text{ compact and }K\subset A\}.\]
The set of measures $\mu\in\mathcal{M}(X)$ (resp., $\mu\in\mathcal{P}(X)$) which are tight is denoted by $\mathcal{M}(X,t)$ (resp., $\mathcal{P}(X,t)$).

Furthermore, a net $(\mu_\alpha)_\alpha$ of measures in $\mathcal{M}(X)$ is said to be \textit{uniformly tight} if for every $\varepsilon>0$ there exists a compact set $K\subset X$ such that
\[\mu_\alpha(X\backslash K)<\varepsilon, \quad \forall \alpha.\]

Now consider a Hausdorff space $Y$ and let $F:X\rightarrow Y$ be a Borel measurable function. Then for every measure $\mu$ on $\mathcal{B}(X)$ we can define a measure $F\mu$ on $\mathcal{B}(Y)$ as
\[F\mu(E)=\mu(F^{-1}(E))\,,\,\,\forall E\in\mathcal{B}(Y),\]
which is called the \textit{induced measure from $\mu$ by $F$} on $\mB(Y)$. It turns out that when $\mu$ is a tight measure and $F$ is a continuous function, the induced measure $F\mu$ is also tight.

In regard to the concept of induced measures, we also mention the well-known result that if $\varphi:Y\rightarrow\mathbb{R}$ is a $F\mu$-integrable function then $\varphi\circ F$ is $\mu$-integrable and
\be\label{eq00}
\int_X\varphi\circ F\rd\mu=\int_Y\varphi \rd F\mu.
\ee

For the sake of notation, if $\mu\in\mM(X)$ and $f$ is a $\mu$-integrable function, we write
\[\mu(f)=\int_X f \rd\mu.\]

\subsection{A topology for the set of measures}
\label{sectopmeasures}

In \cite{Topsoebook}, Topsoe considered a topology in $\mM(X)$ obtained as the smallest one for which the mappings $\mu\mapsto \mu(f)$ are upper semicontinuous, for every bounded, real-valued and upper semicontinuous function $f$ on $X$. Topsoe calls this topology the ``weak topology'', but in order to avoid any confusion we call it here the \emph{weak semi-continuity topology} on $\mathcal{M}(X)$. If a net $(\mu_\alpha)_\alpha$ converges to $\mu$ with respect to this topology, we denote $\mu_\alpha\wconv\mu$.

A more common topology used in $\mM(X)$ is the \emph{weak-star topology,} which is the smallest topology for which the maps $\mu\mapsto \mu(f)$ are continuous, for every bounded, real-valued and continuous function $f$ on $X$. According to Lemma \ref{portmanteau} below, the weak-star topology is, in general, weaker than the weak semi-continuity topology defined above, and they both coincide when $X$ is a completely regular Hausdorff topological space.

Although our framework is based on a general Hausdorff space, the proofs rely on reducing some structures to compact subspaces, hence completely regular, in which case both topologies coincide, so that we could have very well considered only the weak-star topology. However, we prefer to use the weak semi-continuity topology since it is a more natural topology for arbitrary Hausdorff spaces which simplifies our presentation and yields a compactness result in a stronger topology.

The following result provides some useful characterizations for the weak semi-continuity topology (see \cite[Theorem 8.1]{Topsoebook}). Recall that a topological space $X$ is \emph{completely regular} if every nonempty closed set and every singleton disjoint from it can be separated by a continuous function.

\begin{lem}\label{portmanteau}Let $X$ be a Hausdorff space.
For a net $(\mu_\alpha)_\alpha$ in $\mathcal M(X)$ and $\mu\in \mathcal M(X)$,
consider the following statements:
\begin{itemize}
\item [(1)]$\mu_\alpha \wconv\mu$;
\item [(2)]$\limsup \mu_\alpha (f)\leq \mu(f)$, for all $f$ bounded upper
semicontinuous function;
\item [(3)]$\liminf \mu_\alpha (f)\geq \mu(f)$, for all $f$ bounded lower
semicontinuous function;
\item [(4)]$\lim_\alpha \mu_\alpha(X)=\mu(X)$ and $\limsup \mu_\alpha (F)\leq \mu(F)$, for all closed set $F\subset X$;
\item [(5)]$\lim_\alpha \mu_\alpha(X)=\mu(X)$ and $\liminf \mu_\alpha (G)\geq \mu(G)$, for all open set $G\subset X$;
% \item [(6)]$\lim_\alpha\mu_\alpha(A)=\mu(A)$, for all $\mu$-continuity set
%$A$;
% \item [(7)]$\lim_\alpha\mu_\alpha(f)=\mu(f)$, for all bounded $\mu$-continuity
%function $f$;
\item [(6)]$\lim_\alpha\mu_\alpha(f)=\mu(f)$, for all bounded continuous
function $f$.
\end{itemize}

Then the first five statements are equivalent and each of them implies the last one.

Furthermore, if $X$ is a completely regular space and $\mu\in \mM(X,t)$, then all six statements are equivalent.
\end{lem}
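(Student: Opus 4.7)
The plan is to establish the chain of equivalences $(1)\iff(2)\iff(3)$ directly from the definition of the weak semi-continuity topology and sign-reversal, then $(2)\iff(4)$ by passing between semicontinuous functions and indicators, and $(4)\iff(5)$ by taking complements, which together pull in $(3)$ and $(5)$ automatically. The implication $(2)\Rightarrow(6)$ is almost tautological, and the final assertion $(6)\Rightarrow(1)$ is the only delicate point, for which both the complete-regularity hypothesis on $X$ and the tightness hypothesis on $\mu$ are essential.

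For $(1)\iff(2)$, I would simply observe that a subbase for the weak semi-continuity topology at $\mu$ is provided by the sets $\{\nu\in\mM(X):\nu(f)<a\}$ with $f$ bounded upper semicontinuous and $a>\mu(f)$, and that eventual membership of $\mu_\alpha$ in every such subbase element translates exactly into $\limsup_\alpha\mu_\alpha(f)\leq\mu(f)$. The equivalence $(2)\iff(3)$ then follows by replacing $f$ with $-f$, which interchanges upper and lower semicontinuity and swaps $\limsup$ with $-\liminf$. To pass to sets, I would apply $(2)$ to the constant functions $\pm 1$ (yielding $\mu_\alpha(X)\to\mu(X)$) and to $\mathbf{1}_F$ (which is bounded upper semicontinuous precisely because $F$ is closed), giving $(2)\Rightarrow(4)$; analogously $(3)\Rightarrow(5)$ via $\mathbf{1}_G$; and $(4)\iff(5)$ is the direct identity $\mu_\alpha(G)=\mu_\alpha(X)-\mu_\alpha(X\setminus G)$.

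The main content in this first half is $(4)\Rightarrow(2)$, which I would handle by a layer-cake approximation. Given bounded upper semicontinuous $f$, after a translation and scaling assume $0\leq f\leq 1$, and define
\[ f_n=\frac{1}{n}\sum_{k=1}^{n}\mathbf{1}_{\{f\geq k/n\}}, \]
so that $0\leq f-f_n\leq 1/n$ pointwise and each super-level set $\{f\geq k/n\}$ is closed by upper semicontinuity. Applying $(4)$ to each indicator, summing, and then letting $n\to\infty$ (using that $\sup_\alpha\mu_\alpha(X)<\infty$, which follows from $\mu_\alpha(X)\to\mu(X)$) yields $\limsup_\alpha\mu_\alpha(f)\leq\mu(f)$. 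Then $(2)\Rightarrow(6)$ is automatic, since a bounded continuous function is simultaneously upper and lower semicontinuous, so $(2)$ and $(3)$ together pinch $\mu_\alpha(f)$ to $\mu(f)$.

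For the final assertion I would prove $(6)\Rightarrow(4)$ assuming $X$ completely regular and $\mu$ tight. Given a closed set $F$ and $\varepsilon>0$, tightness of $\mu$ implies outer regularity, so I can select an open $U\supset F$ with $\mu(U)<\mu(F)+\varepsilon$, and it also provides a compact $K\subset X$ with $\mu(X\setminus K)<\varepsilon$. The set $C=K\setminus U$ is then compact and disjoint from $F$, and I would use complete regularity of $X$ --- via the standard finite-cover trick of taking the minimum of pointwise separators and truncating --- to produce a continuous $\varphi:X\to[0,1]$ with $\varphi\equiv 1$ on $F$ and $\varphi\equiv 0$ on $C$. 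Then $\mathbf{1}_F\leq\varphi$ and $\mu(\varphi)\leq\mu(U)+\mu(X\setminus K)<\mu(F)+2\varepsilon$, so $(6)$ gives $\limsup_\alpha\mu_\alpha(F)\leq\lim_\alpha\mu_\alpha(\varphi)=\mu(\varphi)<\mu(F)+2\varepsilon$, and $\varepsilon\to 0$ concludes. This step I expect to be the main obstacle: complete regularity alone does \emph{not} separate two disjoint closed sets by a continuous function, and the role of tightness is precisely to replace the ``bad'' closed complement $X\setminus U$ by the compact set $C$, for which such separation is within reach of complete regularity.
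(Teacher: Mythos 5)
Your proof is correct, but there is nothing in the paper to compare it against: the authors do not prove Lemma \ref{portmanteau} at all, they simply cite it as \cite[Theorem 8.1]{Topsoebook} (Topsoe's \emph{Topology and Measure}). So your argument supplies a proof the paper deliberately omits. Checking it on its own terms: the identification of net convergence in the weak semi-continuity topology with $(2)$ via the subbasic neighborhoods $\{\nu:\nu(f)<a\}$, $a>\mu(f)$, is right (eventual membership in each subbasic neighborhood suffices for net convergence); the sign-reversal for $(2)\iff(3)$, the passage to indicators for $(2)\Rightarrow(4)$ and $(3)\Rightarrow(5)$, the complementation for $(4)\iff(5)$, and the layer-cake estimate for $(4)\Rightarrow(2)$ (where $\lim_\alpha\mu_\alpha(X)=\mu(X)$ is indeed what absorbs both the normalization of $f$ and the $\tfrac1n\mu_\alpha(X)$ error) are all sound. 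The last step is also handled correctly: you rightly observe that complete regularity alone does not separate two disjoint closed sets, and that the paper's notion of tightness (inner regularity by compacts on all Borel sets) yields, for a finite measure on a Hausdorff space, both the outer-regular open $U\supset F$ and the compact $K$ with small complement, so that only the compact set $C=K\setminus U$ needs to be separated from $F$ --- which the finite-cover-plus-truncation argument does. The only cosmetic omission is that in $(6)\Rightarrow(4)$ you should also record $\lim_\alpha\mu_\alpha(X)=\mu(X)$ by testing against $f\equiv 1$, but that is immediate. In short: a complete and correct proof of a statement the paper only quotes.
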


We next state a result of compactness on the space of tight measures $\mM(X,t)$ that is going to be essential for our main result. For a proof of this fact, see \cite[Theorem 9.1]{Topsoebook}.

\begin{thm}\label{reformulated}
Let $X$ be a Hausdorff topological space and let $(\mu_\alpha)_\alpha$ be a net in
$\mM(X,t)$ such that $\limsup\mu_\alpha(X)<\infty$. If $(\mu_\alpha)_\alpha$ is uniformly tight, then it is compact with respect to the weak semi-continuity topology in $\mM(X,t)$.
\end{thm}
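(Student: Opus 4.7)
The plan is to reduce the compactness question on the possibly non-regular space $X$ to classical compactness on the compact (hence normal and completely regular) Hausdorff subspaces provided by uniform tightness, and then to glue the resulting partial limits into a single tight Borel measure on $X$.

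First, by passing to a subnet if necessary, fix $M<\infty$ such that $\mu_\alpha(X)\leq M$ eventually. Using uniform tightness with $\varepsilon=1/n$ and taking finite unions, select an increasing sequence of compact sets $K_1\subset K_2\subset\cdots\subset X$ with $\mu_\alpha(X\setminus K_n)<1/n$ for all $\alpha$ and all $n$. Let $\mu_\alpha^n:=\mu_\alpha|_{K_n}\in\mM(K_n,t)$; these have total mass bounded by $M$. Since $K_n$ is compact Hausdorff, hence completely regular, the Riesz representation theorem identifies $\mM(K_n,t)$ with positive linear functionals on $C(K_n)$, and the ball of radius $M$ is weak-star compact by Banach--Alaoglu. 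By Lemma~\ref{portmanteau}, complete regularity of $K_n$ makes the weak-star topology coincide with the weak semi-continuity topology on this ball, so the ball is compact in the latter.

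The Tychonoff product $P:=\prod_n\{\nu\in\mM(K_n,t):\nu(K_n)\leq M\}$ is therefore compact. The map $\alpha\mapsto(\mu_\alpha^n)_n$ lands in $P$, so there is a subnet $(\mu_{\alpha_\beta})_\beta$ such that $\mu_{\alpha_\beta}^n\wconv\nu_n$ in $\mM(K_n,t)$ for every $n$. Extend each $\nu_n$ to a Borel measure $\tilde\nu_n$ on $X$ by $\tilde\nu_n(A)=\nu_n(A\cap K_n)$. To build the limit, I would first show that $(\tilde\nu_n)$ is non-decreasing on Borel sets: by testing against nonnegative $f\in C(K_{n+1})$ one gets $\nu_{n+1}(f)\geq\nu_n(f|_{K_n})$ because $\mu_{\alpha_\beta}^{n+1}(f)\geq\mu_{\alpha_\beta}^n(f|_{K_n})$ term by term, and then tightness of the $\nu_n$ together with Lemma~\ref{portmanteau} upgrades this functional inequality to the set-function inequality $\tilde\nu_n\leq\tilde\nu_{n+1}$. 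Then define
\[\mu(A):=\lim_{n\to\infty}\tilde\nu_n(A)=\sup_n\tilde\nu_n(A),\quad A\in\mB(X).\]
Countable additivity of $\mu$ follows from a monotone convergence argument for set functions; tightness follows from the tightness of each $\nu_n$ combined with $\mu(X\setminus K_n)\leq 1/n$, inherited as a limit of $\mu_{\alpha_\beta}(X\setminus K_n)\leq1/n$.

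To finish, verify $\mu_{\alpha_\beta}\wconv\mu$ via statement (4) of Lemma~\ref{portmanteau}. For closed $F\subset X$, split
\[\mu_{\alpha_\beta}(F)\leq\mu_{\alpha_\beta}(F\cap K_n)+\mu_{\alpha_\beta}(X\setminus K_n).\]
Since $F\cap K_n$ is closed in $K_n$ and $\mu_{\alpha_\beta}^n\wconv\nu_n$, the portmanteau inequality gives $\limsup_\beta\mu_{\alpha_\beta}(F\cap K_n)\leq\nu_n(F\cap K_n)=\tilde\nu_n(F)\leq\mu(F)$; the second term is at most $1/n$. Letting $n\to\infty$ yields $\limsup_\beta\mu_{\alpha_\beta}(F)\leq\mu(F)$. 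Convergence of total masses $\mu_{\alpha_\beta}(X)\to\mu(X)$ is obtained by the same splitting applied to $F=X$, using that $\mu_{\alpha_\beta}(K_n)\to\nu_n(K_n)=\tilde\nu_n(X)$ and $\tilde\nu_n(X)\nearrow\mu(X)$.

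The main obstacle is the gluing step: although compactness on each $K_n$ and the Tychonoff extraction are routine, it takes work to verify that the extensions $(\tilde\nu_n)$ form a genuinely monotone family of Borel measures on $X$, and that the resulting supremum is countably additive and tight. Once monotonicity is established, the portmanteau-type convergence on $X$ follows from a clean approximation using the uniform tightness tail bound $1/n$; the delicate point is that $X$ itself is only Hausdorff, so the portmanteau inequalities must be pushed from the completely regular subspaces $K_n$ to closed subsets of $X$ via the $K_n$-truncation.
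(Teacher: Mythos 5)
Your argument is correct, but it cannot be compared with a proof ``in the paper'': Theorem \ref{reformulated} is not proved there at all --- it is quoted from Topsoe \cite[Theorem 9.1]{Topsoebook}. Your route is the classical Prohorov-style one: truncate to an exhausting sequence of compacts $K_n$ supplied by uniform tightness, use Riesz representation plus Banach--Alaoglu on each compact Hausdorff $K_n$ (where Lemma \ref{portmanteau} lets you pass freely between the weak-star and weak semi-continuity topologies because $K_n$ is completely regular and all the measures involved are tight), extract a simultaneous subnet by Tychonoff, and glue the coordinate limits $\nu_n$ into a single tight limit $\mu=\sup_n\tilde\nu_n$. I checked the two delicate points and they go through: (a) the upgrade of $\nu_{n+1}(f)\geq\nu_n(f|_{K_n})$ to $\tilde\nu_n\leq\tilde\nu_{n+1}$ works via inner regularity of $\nu_n$, outer regularity of $\nu_{n+1}$ on $K_{n+1}$, and Urysohn functions $C\prec f\prec U$; you should also note that the Riesz identification of $\mM(K_n,t)$ with positive functionals uses that tight $=$ Radon on a compact Hausdorff space (an arbitrary finite Borel measure there need not be regular); (b) the tail bound $\mu(X\setminus K_n)\leq 1/n$ follows from the open-set inequality (5) of Lemma \ref{portmanteau} applied to $K_m\setminus K_n$ in $K_m$, and your verification of statement (4) on $X$ then closes the argument since (4) implies (1) for arbitrary Hausdorff $X$. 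What the two approaches buy: the citation to Topsoe keeps the paper short and invokes a statement proved directly in the weak semi-continuity topology by rather different means (Topsoe works with set functions and a compactness criterion phrased via $\limsup$ on closed sets, and obtains relative compactness of arbitrary uniformly tight families); your proof is longer but self-contained, uses only Lemma \ref{portmanteau}, Riesz, Alaoglu and Tychonoff, and makes transparent exactly where complete regularity of the compact pieces substitutes for the missing regularity of $X$.
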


The previous theorem allows us to obtain convergent subnets of a given net in $\mM(X,t)$, provided it satisfies the required conditions. An important property of the weak semicontinuous topology in $\mM(X,t)$, which motivated Topsoe to advance his work on it (see \cite[Preface]{Topsoebook}), is that under this topology the space $\mM(X,t)$ is a Hausdorff space, guaranteeing, in particular, the uniqueness of the limits. This has been proved in \cite[Theorem 11.2]{Topsoebook}, but we state and prove it here with more details.

\begin{thm}\label{hausdorff}
Let $X$ be a Hausdorff space. Then, $\mathcal{M}(X,t)$ is a Hausdorff space with respect to the weak semi-continuity topology.
\end{thm}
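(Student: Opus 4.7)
The plan is to separate two distinct tight Borel measures $\mu, \nu \in \mathcal{M}(X,t)$ by constructing disjoint open neighborhoods in the weak semi-continuity topology, treating two cases.

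If $\mu(X) \neq \nu(X)$, note that the constant function $1$ is both upper and lower semi-continuous, so the map $\lambda \mapsto \lambda(X) = \lambda(1)$ is continuous in the weak semi-continuity topology. Any $c$ strictly between $\mu(X)$ and $\nu(X)$ then yields the separating pair $\{\lambda : \lambda(X) < c\}$ and $\{\lambda : \lambda(X) > c\}$.

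For the remaining case $\mu(X) = \nu(X) = m$, I would first use inner regularity (tightness gives $\mu(A) = \sup\{\mu(L) : L \subset A,\ L \text{ compact}\}$, and two tight measures agreeing on all compacts must coincide) to find a compact $K \subset X$ with, say, $\mu(K) < \nu(K)$, and set $\eta := \nu(K) - \mu(K) > 0$. The indicator $\mathbf{1}_K$ is only upper semi-continuous (compact sets are closed in $X$), so it provides open conditions of the form $\lambda(K) < c$ but no immediate open lower bound on $\lambda(K)$. To manufacture an effective open condition in the other direction, I would apply tightness of $\mu$ to the open set $X \setminus K$ to obtain a compact $K' \subset X \setminus K$ with $\mu(K') > \mu(X \setminus K) - \eta/4$, so that $\mu(K') + \nu(K) > m + 3\eta/4$. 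Since $K$ and $K'$ are disjoint compacts in the Hausdorff space $X$, a standard finite-cover argument produces disjoint open sets $U \supset K$ and $V \supset K'$.

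The neighborhoods I would take are
\[
V_\mu := \{\lambda : \lambda(V) > \mu(V) - \eta/8,\ \lambda(X) < m + \eta/8\},
\]
\[
V_\nu := \{\lambda : \lambda(U) > \nu(U) - \eta/8,\ \lambda(X) < m + \eta/8\}.
\]
Both sets are open in the weak semi-continuity topology: since $U, V$ are open in $X$, the indicators $\mathbf{1}_U, \mathbf{1}_V$ are bounded lower semi-continuous (so the lower bounds on $\lambda(U), \lambda(V)$ are open), and the total-mass constraint is open by continuity of $\lambda \mapsto \lambda(X)$. Plainly $\mu \in V_\mu$ and $\nu \in V_\nu$. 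Any $\lambda \in V_\mu \cap V_\nu$ would satisfy $\lambda(U) + \lambda(V) = \lambda(U \cup V) \leq \lambda(X) < m + \eta/8$ while simultaneously $\lambda(U) + \lambda(V) > \nu(U) + \mu(V) - \eta/4 \geq \nu(K) + \mu(K') - \eta/4 > m + \eta/2$, a contradiction.

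The step I expect to be the main obstacle is precisely the design of these neighborhoods, because of the one-sidedness of the weak semi-continuity topology: it only provides open constraints of the form ``$\lambda(f) < c$ for upper semi-continuous $f$'' or, equivalently, ``$\lambda(g) > d$ for lower semi-continuous $g$''. Producing the auxiliary compact $K' \subset X \setminus K$ via tightness, and then passing to disjoint open envelopes $U, V$, is the key maneuver that lets \emph{both} neighborhoods be expressed using only lower bounds on $\lambda$-measure of open sets plus the continuous total-mass condition, turning what would otherwise be a forbidden lower bound on $\lambda(K)$ into an admissible open inequality.
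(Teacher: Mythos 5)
Your argument is correct, and it takes a genuinely different route from the paper. The paper proves the statement indirectly, via the characterization of Hausdorff spaces by uniqueness of net limits: it assumes a net $(\mu_\alpha)_\alpha$ converges to both $\mu_1$ and $\mu_2$, uses Lemma \ref{portmanteau} to get $\mu_1(A)\leq\mu_2(\bar A)$ for open $A$, and then, for an arbitrary Borel set $E$, separates compacts $K_1\subset E$ and $K_2\subset E^c$ by disjoint open sets to obtain $\mu_1(K_1)+\mu_2(K_2)\leq\mu_2(X)$, concluding by taking suprema over compacts (tightness). You instead exhibit explicit disjoint neighborhoods, working directly from the subbase of the weak semi-continuity topology (sets of the form $\{\lambda:\lambda(f)<c\}$ for bounded upper semicontinuous $f$, equivalently $\{\lambda:\lambda(G)>c\}$ for open $G$ together with the continuous total-mass functional). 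The geometric core is the same in both proofs --- tightness produces a pair of disjoint compacts on which the two measures concentrate differently, and Hausdorffness of $X$ separates them by disjoint open envelopes --- but your version is constructive and self-contained relative to the definition of the topology, not needing Lemma \ref{portmanteau} at all, while the paper's version is shorter given that lemma and fits the net-based machinery used throughout (e.g.\ in Theorem \ref{existence}, where only uniqueness of limits is actually invoked). One small remark: your reduction to finding a compact $K$ with $\mu(K)\neq\nu(K)$ quietly uses that two tight finite measures agreeing on all compacts agree on all Borel sets, which is immediate from inner regularity but worth stating; with that made explicit, every step checks out, including the key estimate $\lambda(U)+\lambda(V)=\lambda(U\cup V)\leq\lambda(X)<m+\eta/8$ against $\lambda(U)+\lambda(V)>\nu(K)+\mu(K')-\eta/4>m+\eta/2$.
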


\begin{proof}
First, recall that a Hausdorff space can be characterized as a topological
space where every convergent net converges to at most one point. Therefore, it is enough
to prove that if $(\mu_\alpha)_\alpha$ is a net in $\mathcal M(X;t)$ which
converges to two different elements $\mu_1,\mu_2 \in \mathcal M(X;t)$,
i.e. $\mu_\alpha \wconv\mu_1$ and $\mu_\alpha \wconv\mu_2$, then
$\mu_1=\mu_2$. For $A\in \mathcal B(X)$, denote by $\mathring A$ the interior
of $A$ and by $\bar A$ the closure of $A$. Using Lemma
\ref{portmanteau}, we obtain that
\[\mu_1(\mathring A)\leq \liminf_\alpha \mu_\alpha(\mathring A)\leq
\limsup_\alpha \mu_\alpha(\bar A)\leq \mu_2(\bar A).\]
Now, for $E\in \mathcal B(X)$, let us prove that $\mu_1(E)\leq \mu_2(E)$. In
order to do so, consider any compact sets $K_1\subset E$ and $K_2\subset E^c$.
Since $X$ is Hausdorff there exist disjoint open sets $A$ and $B$ such that
$K_1\subset A$ and $K_2\subset B$. It is clear that $\bar A\subset
X\setminus K_2$. Thus,
\[\mu_1(K_1)\leq \mu_1(A)\leq \mu_2(\bar A)\leq
\mu_2(X\setminus K_2)=\mu_2(X)-\mu_2(K_2),\]
which leads us to
\[\mu_1(K_1)+\mu_2(K_2)\leq \mu_2(X).\]
Since $K_1$ and $K_2$ are arbitrary compact sets satisfying $K_1\subset E$
and $K_2\subset E^c$, we can take the supremum over all compact
sets $K_1\subset E$ and the supremum  over all compact sets $K_2\subset
E^c$ and we find that
\[\sup\{\mu_1(K_1)\,|\, K_1 \mbox{ compact}, K_1\subset E \}
+\sup\{\mu_2(K_2)\,|\,K_2 \mbox{ compact}, K_2\subset E^c \}\leq \mu_2(X).\]
Since $\mu_1$ and $\mu_2$ are tight, we conclude that
\[\mu_1(E)+\mu_2(E^c)\leq \mu_2(X).\]
Thus $\mu_1(E)\leq \mu_2(E)$, for all $E\in \mathcal B(X)$. Now, since by using Lemma \ref{portmanteau} we have $\mu_1(X)=\mu_2(X)$, it follows that $\mu_1=\mu_2$.
\end{proof}

%Since by Lemma \ref{portmanteau} both weak and weak* topologies coincide when $X$ is a completely regular space, Theorem \ref{hausdorff} implies that $\mM(X,t)$ is also a Hausdorff space with respect to the weak*-topology in this case. We then have the following result.

%\begin{cor}\label{hausdorff*}
%Let $X$ be a completely regular space. Then $\mM(X,t)$ is a Hausdorff space with respect to the weak*-topology.
%\end{cor}

Evidently, all the results shown above are also valid in the space of probability measures. In the next section, these results are applied in that space, since it is the natural one in the context of statistical solutions. We consider both the spaces of probability measures defined over the Hausdorff space $X$ and over the space of continuous paths $\mX$.

\section{Abstract Results}
\label{secabstract}

The purpose of this section is to provide a general definition of trajectory statistical solutions and to prove their existence in regard to a given initial probability distribution and under suitable hypotheses. These abstract trajectory statistical solutions are defined with respect to a subset $\mU$ of $\mX$, upon which the hypotheses are imposed. The proof of existence is given at the end of the section, after some essential lemmas.

\subsection{Definition of Trajectory Statistical Solutions}
\label{secdefstatsol}

We define below our abstract concept of a trajectory statistical solution.

\begin{defs}\label{def-stat-sol}
Consider a subset $\mU\subset\mX$. We say that a Borel probability measure $\rho$ on $\mX$ is a \textit{$\mU$-trajectory statistical solution} if
\begin{itemize}
\item [(i)] $\rho$ is tight;
\medskip
\item [(ii)] $\exists \mV\in\mathcal B(\mX)$ such that $\mV\subset \mU$ and $\rho(\mV)=1$.
\end{itemize}
\end{defs}

Condition $(ii)$ above can be rephrased by saying that $\rho$ is carried by a Borel subset $\mV$ of $\mU$.

As mentioned in the Introduction, our abstract definition was inspired by the concept of a Vishik-Fursikov measure given in \cite{FRT}, which is carried by the Borel set of Leray-Hopf weak solutions of the Navier-Stokes equations. Since in our case we do not assume a~priori that $\mU$ is a Borel set, we need to define our solutions as being carried by a Borel subset of $\mU$.

Note that whenever $\mU$ is a nonempty set, we can always obtain a $\mU$-trajectory statistical solution by considering the Dirac measure $\delta_{u}$, for any element $u\in\mU$ ($\delta_u$ is tight and $\{u\}$ is a Borel set in $\mU$ satisfying $\delta_{u}(\{u\})=1$). Our main concern thus is not the existence of a trajectory statistical solution itself, but actually the existence of a $\mU$-trajectory statistical solution for an initial value problem, which consists in the following:

\begin{prob}[Initial Value Problem]
  \label{initvalueprob}
  Let $I\subset \mathds{R}$ be an interval closed and bounded on the left, with left end point $t_0$, and let $X$ be a Hausdorff topological space. Let $\mX=\Cloc(I,X)$ be the space of continuous paths in $X$ endowed with the compact open topology. Let $\mU$ be a given subset of $\mX$. Given an ``initial'' tight Borel probability measure $\mu_0\in\mP(X)$ on $X$, we look for a $\mU$-trajectory statistical solution $\rho$ on $\mX$ satisfying $\Pi_{t_0}\rho = \mu_0$, i.e. we look for a measure $\rho\in\mP(\mX)$ satisfying conditions $(i)$ and $(ii)$ of Definition \ref{def-stat-sol} and such that
  \[\rho(\Pi_{t_0}^{-1}(A))=\mu_0(A)\,,\,\,\forall A\in\mB(X).\]
\end{prob}

Although the definition above was given for an arbitrary $\mU$, in order to obtain an existence result, a series of hypotheses must be considered over this set.

Since here we are only interested on the initial value problem, we consider, from now on, an interval $I\subset\mathds{R}$ which is closed and bounded on the left, with left end point $t_0$. The time $t_0$ represents the initial time.

\subsection{Hypotheses on the set of trajectories}

We present below the fundamental set of hypotheses for the abstract framework that allows us to obtain an existence result for the initial value problem described in Problem \ref{initvalueprob}.

\begin{defs}\label{hypothesisH}
We say that a subset $\mathcal{U}\subset\mX$ satisfies the hypothesis $(H)$ if the following conditions are satisfied
\renewcommand{\theenumi}{{H}\arabic{enumi}}

\begin{enumerate}
\item \label{H1} $\Pi_{t_0}\mU=X$;
\item \label{H2} $\Pi_{t_0}^{-1}K\cap\mathcal{U}$ is relatively compact in $\mX$, for every compact subset $K\subset X$;
\item \label{H3} There exists a function $V:I\times\mX\rightarrow \mathbb{R}$ such that
\renewcommand{\theenumii}{\roman{enumii}}
\makeatletter
\renewcommand{\p@enumii}{}
\makeatother
\begin{enumerate}
\item \label{H3i} $V(\cdot,u)$ is lower semi-continuous on $I$, for every $u\in\mX$;
\item \label{H3ii} $V(t_0,\cdot)$ is lower semi-continuous on $\mX$;
\item \label{H3iii} $V(t_0,u)=V(t_0,v)$, for every $u,v\in\mX$ such that $u(t_0)=v(t_0)$;
\item \label{H3iv} For every $t\in I$ and for every compact $K\subset X$, $V(t,\cdot)$ is lower semi-continuous on $\overline{\Pi_{t_0}^{-1}K \cap \mU}$;
\item \label{H3v} For every compact $K\subset X$, $V(t_0,\cdot)$ is bounded on $\Pi_{t_0}^{-1}K\cap\mU$;
\item \label{H3vi} For every compact $K\subset X$,
\begin{eqnarray*}
\Pi_{t_0}^{-1}K\cap\mU&=&\left\{u\in\overline{\Pi_{t_0}^{-1}K\cap\mU}\,|\,\liminf_{t\rightarrow t_0^+}V(t,u)\leq V(t_0,u)\right\}\\
&=&\left\{u\in\overline{\Pi_{t_0}^{-1}K\cap\mU}\,|\,\limsup_{t\rightarrow t_0^+}V(t,u)\leq V(t_0,u)\right\};
\end{eqnarray*}
\item \label{H3vii} For every compact $K\subset X$,
\[\liminf_{t\rightarrow t_0^+}\sup_{u\in\Pi_{t_0}^{-1}K\cap\mU}(V(t,u)-V(t_0,u))\leq 0.\]
\end{enumerate}
\end{enumerate}
\end{defs}

Reading the hypotheses above without having a background equation in mind does not give much insight on how they would fit into a specific problem. So let us imagine for a moment that we want to prove the existence of a statistical solution for a given evolution equation. Then $X$ is taken to be a phase space of the equation, and is assumed that the solutions of the equation are continuous as functions from a time-interval $I$ to $X$. Hence, the solutions belong to the space of continuous paths $\mX=\Cloc(I,X)$. The space $\mU$ is taken to be the set of solutions of the equation with the topology inherited from $\mX$.

The first hypothesis, \eqref{H1}, is simply a mathematical statement of existence of these individual solutions, i.e. given any initial condition $u_0$ in the phase space $X$, there exists a continuous solution $u\in \mU$ with $u(t_0)=u_0$.

Hypothesis \eqref{H2} is usually a consequence of the compactness obtained through typical \textit{a~priori} estimates satisfied by the solutions.

The function $V$ satisfying \eqref{H3} is needed when $\mU$ is not a closed set, as is the case with the Leray-Hopf weak solutions of the Navier-Stokes equations and similar systems. In this case, in order to obtain a Borel set in $\mU$ as a carrier for the statistical solution, we must essentially use the structure of the energy inequality associated with the equation (see \cite{FRT}), but which is not present in an abstract setting. Hypotheses \eqref{H3i} to \eqref{H3vii} of \eqref{H3} basically compensate for the lack of such an explicit inequality.

\subsection{Existence of Trajectory Statistical Solutions}

The present section is dedicated to the proof of existence of a $\mU$-trajectory statistical solution satisfying a given initial condition, as described in the Initial Value Problem \ref{initvalueprob}. We use that the initial probability measure is assumed to be tight to reduce the problem to the case of an initial probability measure carried by a compact set. In order to solve this reduced problem, we need a series of lemmas that we prove below. After proving those lemmas, we address the main theorem.

Lemma \ref{choquet} allows us to approximate the initial measure with a compact support $K$ by convex combinations of Dirac measures satisfying a particular inequality. This inequality involves a real-valued function $V_0$ on $X$ associated with the characterization \eqref{H3vi} of \eqref{H3}, related to the trajectory space $\mU$. This function $V_0$ is introduced in \eqref{defV0} and is proved to be lower-semicontinuous in Lemma \ref{V0lowersemicont}, a condition which is needed when applying Lemma \ref{portmanteau} to the approximating net. Then, we prove that the candidate solution is a measure carried by $\KU$. We verify that $\KU$ is in fact a measurable set in Lemma \ref{KUBorel}.

The first lemma we show considers an arbitrary Borel function $f$ defined on a compact space $K$ and a Borel probability measure $\mu$ on $K$. Assuming that $f$ is bounded below, we construct, through a finite partition of $K$, a net of convex combinations of Dirac measures converging to $\mu$ in the weak semi-continuity topology on $X$. This construction is done in such a way that the integral of $f$ with respect to each element of this net is a lower bound for the integral of $f$ with respect to $\mu$. Due to this constraint in the integral, this result can be viewed as a Krein-Milman theorem with a twist.

This construction is in fact related to the approximation of a Riemann integral by lower sums. However, we cannot simply take the infimum of the function on each subset in the partition since we need points in the space $K$ in order to construct the Dirac measures for the approximation of the initial measure. This would certainly work if the function were continuous since the infimum would be a minimum. Actually, if the function were continuous, we could take a point which coincides with the mean value of the function, and the inequality would become an equality. This is in fact exploited in the theory of Choquet, in particular in \cite[Lemma 26.14]{Choquet2}, to yield a single converging net that preserves the integral value, for arbitrary linear functionals. In our case, however, the function is not continuous, but fortunately the inequality is all we need. For that, the infimum value of $f$ need not be attained, we just need to choose points such that the value of the function at those points are smaller than the average of the function on the corresponding subset of the partition.

\begin{lem}\label{choquet}
Let $K$ be a compact Hausdorff topological space. If $\mu$ is a tight Borel probability measure on $K$ and $f:K\rightarrow\mathbb{R}$ is a Borel function which is bounded below, then there exists a net $(\mu_\alpha)_\alpha$ of discrete Borel probability measures on $K$ such that $\mu_\alpha \wconv \mu$ and
\be\label{eq0}
\int_K f \rd\mu_\alpha\leq \int_K f \rd\mu\,,\,\,\forall \alpha.
\ee
\end{lem}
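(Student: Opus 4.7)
The plan is to build the approximating net by attaching Dirac masses to representative points inside the cells of a suitable finite Borel partition of $K$, with weights inherited from $\mu$. The directed set will be $\mathcal{A}$ consisting of pairs $\alpha=(F,n)$ with $F\subset C(K)$ a finite subset and $n\in\mathbb{N}$, ordered by $(F_1,n_1)\leq(F_2,n_2)$ iff $F_1\subset F_2$ and $n_1\leq n_2$. For $\alpha=(F,n)$, compactness of $K$ makes each $g(K)$ a compact subset of $\mathbb{R}$, which can be covered by finitely many pairwise disjoint half-open intervals of length less than $1/n$; pulling these back by $g$ and taking the common refinement over $g\in F$ yields a finite Borel partition $\{A_i^\alpha\}_{i=1}^{k_\alpha}$ of $K$ on each cell of which every $g\in F$ oscillates by less than $1/n$.

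For each cell $A_i^\alpha$ with $\mu(A_i^\alpha)>0$, introduce the average $\bar{f}_{A_i^\alpha}:=\mu(A_i^\alpha)^{-1}\int_{A_i^\alpha}f\,\rd\mu$, which is well-defined in $(-\infty,+\infty]$ thanks to the lower bound on $f$. Pick $x_i^\alpha\in A_i^\alpha$ with $f(x_i^\alpha)\leq \bar{f}_{A_i^\alpha}$: if $\bar{f}_{A_i^\alpha}=+\infty$ any choice does the job, and if it is finite, the alternative $f>\bar{f}_{A_i^\alpha}$ everywhere on $A_i^\alpha$ would force, via the countable decomposition $A_i^\alpha=\bigcup_{m\in\mathbb{N}}\{x\in A_i^\alpha:f(x)>\bar{f}_{A_i^\alpha}+1/m\}$ together with $\mu(A_i^\alpha)>0$, the strict inequality $\int_{A_i^\alpha}f\,\rd\mu>\bar{f}_{A_i^\alpha}\mu(A_i^\alpha)$, contradicting the very definition of $\bar{f}_{A_i^\alpha}$. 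For cells with $\mu(A_i^\alpha)=0$ choose $x_i^\alpha$ arbitrarily, and define $\mu_\alpha:=\sum_i\mu(A_i^\alpha)\delta_{x_i^\alpha}$, which is manifestly a discrete, and hence tight, Borel probability measure on $K$. The desired integral inequality follows by summing the pointwise ones: $\int_K f\,\rd\mu_\alpha=\sum_i\mu(A_i^\alpha)f(x_i^\alpha)\leq\sum_i\int_{A_i^\alpha}f\,\rd\mu=\int_K f\,\rd\mu$.

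To show $\mu_\alpha\wconv\mu$, observe that $K$ being compact Hausdorff is completely regular and that $\mu$ is tight, so Lemma~\ref{portmanteau} reduces the task to checking $\mu_\alpha(g)\to\mu(g)$ for each $g\in C(K)$. Given such a $g$ and $\varepsilon>0$, choose $n_0$ with $1/n_0<\varepsilon$ and set $\alpha_0=(\{g\},n_0)$. For $\alpha=(F,n)\geq\alpha_0$ one has $g\in F$, and the oscillation of $g$ on every cell $A_i^\alpha$ is strictly less than $1/n\leq 1/n_0<\varepsilon$; since both $g(x_i^\alpha)$ and the cell average $\bar{g}_{A_i^\alpha}$ lie in $[\inf_{A_i^\alpha}g,\sup_{A_i^\alpha}g]$, their difference is at most $\varepsilon$, which yields $|\mu_\alpha(g)-\mu(g)|\leq\sum_i\mu(A_i^\alpha)\varepsilon=\varepsilon$. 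Hence the weak-star convergence, and consequently the weak semi-continuity convergence, holds. The only subtle step is the existence of the cell points $x_i^\alpha$ with the pointwise inequality $f(x_i^\alpha)\leq\bar{f}_{A_i^\alpha}$; because $f$ is merely Borel, one cannot rely on attaining a minimum as one would for continuous $f$, and instead exploits positivity of $\mu(A_i^\alpha)$ together with the lower-bound hypothesis, which is exactly why the latter is essential.
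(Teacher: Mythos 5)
Your proof is correct, but it reaches the conclusion by a genuinely different construction than the paper's. The paper indexes its net by the open coverings of $K$ ordered by refinement: it extracts a finite subcover, takes a partition of unity $\{g_i\}$ subordinated to it (using normality of compact Hausdorff spaces), uses the overlapping weights $\mu(g_i)$, and picks each $x_i$ in $\operatorname{supp} g_i$ so that $f(x_i)\leq \int_K f\,\rd\nu_i$ for the $g_i$-weighted average $\nu_i=g_i\mu/\mu(g_i)$; convergence against a continuous $\varphi$ is then obtained from uniform continuity by passing to a cover on which $\varphi$ has small oscillation. You instead index by pairs (finite set of continuous functions, precision $1/n$) and build an honest finite Borel partition by pulling back interval decompositions of each $g(K)$ and refining, with weights $\mu(A_i^\alpha)$ and points chosen below the genuine conditional average on each positive-measure cell. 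The crucial step is identical in both arguments: a Borel function bounded below must take a value no larger than its ($\mu$- or $\nu_i$-) average on a set of positive measure, which is exactly where the lower bound and the failure of continuity are handled; and both proofs finish the same way, upgrading weak-star convergence to convergence in the weak semi-continuity topology via Lemma \ref{portmanteau} using complete regularity of $K$ and tightness of $\mu$. What your route buys is elementarity: no partition of unity, no appeal to normality or uniform continuity, and disjoint cells that make the averaging transparent. What the paper's route buys is closer alignment with the Choquet-theoretic template (cells are supports of continuous functions, and for continuous $f$ the inequality can be promoted to an equality preserving arbitrary continuous linear functionals, cf.\ the discussion preceding the lemma). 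One trivial point to tidy in your write-up: the common refinement may produce empty cells, in which no point $x_i^\alpha$ can be chosen; these have $\mu$-measure zero and should simply be discarded from the partition rather than assigned an arbitrary point.
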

\begin{proof}
If $f$ assumes negative values, consider the function $g=f-m$, with $m=\inf_{x\in K}f(x)$. Then, once the result is proved for $g$, it can also be obtained for $f$ using that $\mu(X)=\mu_\alpha(X)=1$, for every $\alpha$. So we assume, for simplicity, that $f$ is non-negative.

Let $\mathcal{A}$ be a covering of $K$ by open sets. Since $K$ is compact, there exists $\{A_1,\ldots,A_n\}$ $\subset\mathcal{A}$ such that $K=\bigcup_{i=1}^n A_i$. Furthermore, since every compact Hausdorff space is normal, it follows that there exists a partition of unity $\{g_i\}_{i=1}^n$ subordinated to $\{A_i\}_{i=1}^n$ \cite[Theorem 36.1]{Munkres}. We may assume, without loss of generality, that $\text{supp}\,g_i\neq\emptyset$, for all $i\in\{1,\cdots,n\}$.

Consider each $i\in\{1,\cdots,n\}$. If $\mu(g_i)=0$, then let $x_i$ be any point in $\text{supp}\,g_i$. If $\mu(g_i)\neq0$, define first the measure
\[\nu_i=\frac{g_i\mu}{\mu(g_i)}\,,\]
which acts on a set $A\in \mathcal{B}(K)$ as
\begin{eqnarray*}
\nu_i(A)&=&\frac{1}{\mu(g_i)}\int_K g_i\chi_A \rd\mu\\
        &=&\frac{1}{\mu(g_i)}\int_A g_i \rd\mu.
\end{eqnarray*}

Note that $\nu_i\in \mP(K)$ and that $\nu_i$ is carried by $\text{supp}\,g_i$. Since $g_i$ is continuous and with $\text{supp}\,g_i \neq \emptyset$, then $\text{supp}\,g_i$ has positive measure with respect to $\nu_i$. We claim that there exists $x_i\in \text{supp}\,g_i$ such that
\begin{equation}\label{eq1}
f(x_i)\leq \int_K f \rd\nu_i.
\end{equation}
In fact, if $\int_K f\rd\nu_i=+\infty$, then any $x_i\in \text{supp}\,g_i$ satisfies \eqref{eq1}. On the other hand, let us suppose that $\int_K f\rd\nu_i<+\infty$. If we had
\[f(x)>\int_K f\rd\nu_i\,,\]
for every $x\in \text{supp}\,g_i$, then
\[\int_{supp\,g_i}f\rd\nu_i>\int_{supp\,g_i}\left( \int_K f\rd\nu_i\right)\rd\nu_i=\int_K f\rd\nu_i=\int_{supp\,g_i} f\rd\nu_i,\]
which is a contradiction.
Thus, we may consider points $x_i\in\text{supp}\,g_i\subset A_i$ satisfying \eqref{eq1}, for any $i\in\{1,\cdots,n\}$ such that $\mu(g_i)\neq0$.

Now, define
\[\mu_{\mathcal{A}}=\sum_{i=1}^n\mu(g_i)\delta_{x_i}.\]
Note that $\mu_{\mathcal{A}}\in\mP(K)$. Also, using that $f\geq 0$ and that $\{g_i\}_{i=1}^n$ is a partition of unity, we obtain
\begin{eqnarray*}
\int_Kf\rd\mu_{\mathcal{A}}&=&\sum_{i=1}^n\mu(g_i)f(x_i)=\sum_{\stackrel{1\leq i\leq n}{\mu(g_i)\neq0}}\mu(g_i)f(x_i)\leq\\
&\leq&\sum_{\stackrel{1\leq i\leq n}{\mu(g_i)\neq0}}\mu(g_i)\int_Kf\rd\nu_i=\sum_{\stackrel{1\leq i\leq n}{\mu(g_i)\neq0}}\int_Kg_if\rd\mu\\
&\leq&\sum_{i=1}^n\int_Kg_if\rd\mu=\int_Kf\rd\mu.
\end{eqnarray*}

Considering the set of all open coverings $\mA$ of $K$ ordered by refinement, it follows that $(\mu_{\mathcal{A}})_{\mathcal{A}}$ is a net of discrete Borel probability measures on $K$. Then, it only remains to prove that $\mu_{\mathcal{A}}\wconv \mu$.

Let $\varphi\in\mathcal{C}_{b}(K)$ or, equivalently, let $\varphi$ be a continuous function on $K$.
%Then, since $\mu_{\mathcal{A}}$ and $\mu$ are carried by $K$, we have that
%\[\int_X \varphi \rd\mu_{\mathcal{A}}-\int_X \varphi \rd\mu=\int_K \varphi|_{K} \rd\mu_{\mathcal{A}}-\int_K \varphi|_{K} \rd\mu.\]
Since $K$ is compact, we have that $\varphi$ is uniformly continuous. Then, given $\varepsilon>0$, we may choose a covering $\mathcal{A}$ of $K$ such that if $A\in\mathcal{A}$ then
\[|\varphi(x)-\varphi(y)|<\varepsilon\,,\,\,\forall x,y \in A.\]
Note that
\[\left|\int_K\varphi \rd\mu_{\mathcal{A}}-\int_K\varphi \rd\mu\right|\leq\sum_{i=1}^n\left|\mu(g_i)\varphi(x_i)-\int_K g_i\varphi\rd\mu\right|.\]
But
\begin{eqnarray*}
\left|\mu(g_i)\varphi(x_i)-\int_K g_i\varphi\rd\mu\right|&=&\left|\int_K g_i(x)(\varphi(x_i)-\varphi(x))\rd\mu(x)\right|\\
&\leq&\sup_{x\in
supp\,g_i}|\varphi(x_i)-\varphi(x)|\int_K g_i\rd\mu.
\end{eqnarray*}
Thus,
\[\left|\int_K\varphi \rd\mu_{\mathcal{A}}-\int_K\varphi \rd\mu\right|<\left(\sum_{i=1}^n\int_K g_i\rd\mu\right)\varepsilon=\varepsilon\,.\]
This implies that
\[ \int_K \varphi \rd\mu_{\mathcal{A}} \rightarrow \int_K \varphi \;\rd\mu,
\]
for every (bounded) continuous function on $K$, which means that $\mu_{\mathcal{A}}$ converges weak star to $\mu$. Now since $K$ is in particular a completely regular space and $\mu$ is a tight measure, it follows by Lemma \ref{portmanteau} that $\mu_\mA\wconv\mu$.
\end{proof}

Hypothesis \eqref{H1} and condition \eqref{H3iii} of \eqref{H3} imply that for every $u_0$ in $X$ there exists an element $u\in \mX$ such that $u(t_0)=u_0$ and that $V(t_0,u)$ independs of the choice of such $u$. This allows us to define a function $V_0:X\rightarrow\mathds{R}$ given by
\be\label{defV0}V_0(u_0)=V(t_0,u)\,,\,\,\text{for any $u\in\mX$ such that $u(t_0)=u_0$}.\ee
We prove next that the lower semi-continuity of $V(t_0,\cdot)$, guaranteed by condition \eqref{H3ii} of \eqref{H3}, implies that $V_0$ is also lower semi-continuous.

\begin{lem}\label{V0lowersemicont}
$V_0$ is a lower semi-continuous function on X.
\end{lem}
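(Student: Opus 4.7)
The plan is to reduce lower semi-continuity on $X$ to the already-assumed lower semi-continuity of $V(t_0,\cdot)$ on $\mX$ (condition \eqref{H3ii}) by lifting points of $X$ to constant paths in $\mX$. For each $u_0\in X$, let $c_{u_0}\in\mX$ be the constant path $c_{u_0}(t)=u_0$ for all $t\in I$. This is continuous, so $c_{u_0}\in\mX$, and since $c_{u_0}(t_0)=u_0$, the defining relation \eqref{defV0} together with \eqref{H3iii} gives $V_0(u_0)=V(t_0,c_{u_0})$.

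The first key step is to observe that the map $u_0\mapsto c_{u_0}$ is continuous from $X$ into $\mX$ with the compact-open topology. Indeed, for any compact $J\subset I$ and any open $U\subset X$, one has $c_{u_0}\in S(J,U)$ iff $u_0\in U$, so the preimage of the subbasic set $S(J,U)$ under this map is simply $U$, which is open. Consequently, if $(u_0^\alpha)_\alpha$ is any net in $X$ converging to $u_0$, then $c_{u_0^\alpha}\to c_{u_0}$ in $\mX$.

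Using hypothesis \eqref{H3ii}, lower semi-continuity of $V(t_0,\cdot)$ on $\mX$ then yields
\[
\liminf_\alpha V_0(u_0^\alpha)=\liminf_\alpha V(t_0,c_{u_0^\alpha})\geq V(t_0,c_{u_0})=V_0(u_0),
\]
which proves that $V_0$ is lower semi-continuous on $X$. There is no real obstacle here beyond recognising that the compact-open topology on constant paths collapses to the original topology on $X$; everything else is a direct transcription through the identification \eqref{defV0}.
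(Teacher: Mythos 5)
Your proof is correct and is essentially the same as the paper's: both reduce to condition (H3ii) by lifting points of $X$ to constant paths, the key topological fact being that the compact-open topology restricted to constant paths reproduces the topology of $X$ (in the paper this appears as $\Pi_{t_0}S(J,U)=U$, in your version as $\iota^{-1}(S(J,U))=U$ for the embedding $\iota(u_0)=c_{u_0}$). Your packaging via continuity of $\iota$ plus the fact that a lower semi-continuous function composed with a continuous map is lower semi-continuous is a clean shortcut, and the net characterization of lower semi-continuity you invoke is valid in arbitrary topological spaces, so there is no gap.
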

\begin{proof}
Let $u_0\in X$. Consider the function $u\in\Cloc(I,X)$ defined by
\[u(t)=u_0\,,\,\,\forall t\in I.\]
Since $V(t_0,\cdot)$ is a lower semi-continuous function on $\mX$ and the family of sets $\{S(J,U):\,J\subset I\text{ is compact},\,U\subset X\text{ is open}\}$ form a subbase for the compact-open topology in $\Cloc(I,X)$, then for any given $\varepsilon>0$ there exists a neighborhood $B$ of $u$ of the form
\[B=\bigcap_{i=1}^n S(J_i,U_i)\,,\,\,J_i\subset I\text{ compact},\,U_i\subset X\text{ open},\]
such that
\be\label{eq5a}
V(t_0,v)\geq V(t_0,u)-\varepsilon\,,\,\,\forall v\in B.
\ee
Let $U=\bigcap_{i=1}^n U_i$ and consider $a_i$, $b_i$ in $I$ such that $J_i\subset[a_i,b_i]$, for every $i\in\{1,\ldots,n\}$. Set $t_1=\max\{b_1, \ldots, b_n\}$ and let $J=[t_0,t_1]$, where $t_0$ is the left end point of $I$. Note that $u_0\in U$ and, consequently, $u\in S(J,U)$. Since $S(J,U)\subset B$, then \eqref{eq5a} is valid in particular for every $v\in S(J,U)$. By the definition of $V_0$, we then obtain that
\be\label{eq5a1}
V_0(v_0)\geq V_0(u_0)-\varepsilon\,,\,\,\forall v_0\in \Pi_{t_0}S(J,U).
\ee
Now observe that $\Pi_{t_0}S(J,U)=U$. Indeed, since $t_0\in J$ then $\Pi_{t_0}v\in U$, for every $v\in S(J,U)$. On the other hand, for every $v_0\in U$ the function
\[v(t)=v_0\,,\,\,\forall t\in I,\]
is such that $v \in S(J,U)$ and $\Pi_{t_0}v=v_0$.

Thus, since $U$ is a neighborhood of $u_0$, \eqref{eq5a1} implies that $V_0$ is lower semi-continuous on $u_0$.
\end{proof}

\begin{rmk}
If $X$ is a locally path connected space, then an alternative proof can be given for Lemma \ref{V0lowersemicont}. First, we observe that, for every $c\in\mathds{R}$, we can write
\[V_0^{-1}((c,\infty))=\Pi_{t_0}\{V(t_0,\cdot)^{-1}((c,\infty))\}.\]
Since by \eqref{H3ii} the function $V(t_0,\cdot)$ is lower semi-continuous on $\mX$,
we have that the set $V(t_0,\cdot)^{-1}((c,\infty))$ is open in $\mX$. Now, using that $X$ is locally path connected, it can be showed that $\Pi_{t_0}$ is an open map. Then, it follows that $V_0^{-1}((c,\infty))$ is an open set in $X$.

 When $X$ is not locally path connected, however, the map $\Pi_{t_0}$ is not necessarily open, as the following example shows: Let $A\subset\mathds{R}^2$ be the set
\[A=\{(x,\sin(1/x)):\,x\in(0,1]\}.\]
Note that the closure of $A$ in $\mathds{R}^2$ is the set $(\{0\}\times[-1,1])\cup A$. Now consider the space $X$ as
\[X=(\{0\}\times[-2,2])\cup A,\]
with the topology inherited from $\mathds{R}^2$. This space is compact and connected but not locally path connected. If $(0,y)\in X$ is such that $y\notin [-1,1]$, there exists a neighborhood $U$ of $(0,y)$ in $X$ such that $U\cap A=\emptyset$. Then, for any compact subinterval $J\subset I$ with $t_0\notin J$, it follows that
\[\Pi_{t_0}(S(J,U))=\{0\}\times[-2,2],\]
where $S(J,U)$ is the open set in the compact open-topology of $\mX = \Cloc(I,X)$ defined in \eqref{defSJU}. Since $\{0\}\times[-2,2]$ is not open in $X$, it follows that $\Pi_{t_0}$ cannot be an open map.
\end{rmk}

The next result is needed for measurability purposes, as explained in the beginning of this section.

\begin{lem}\label{KUBorel}
For every compact subset $K\subset X$, $\KU$ is a Borel set in $\mX$.
\end{lem}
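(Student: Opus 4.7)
The plan is to exploit the characterization provided in \eqref{H3vi}, namely
\[
\KU \;=\; \bigl\{u \in \fKU \;:\; \limsup_{t \to t_0^+} V(t,u) \leq V(t_0,u)\bigr\}.
\]
Since $\fKU$ is closed in $\mX$ and therefore Borel, it suffices to show that the condition $\limsup_{t\to t_0^+} V(t,u) \leq V(t_0,u)$ defines a Borel subset of $\fKU$. By \eqref{H3ii}, $V(t_0,\cdot)$ is lower semi-continuous, hence Borel measurable, on $\mX$, so the task reduces to showing that the map $u \mapsto \limsup_{t\to t_0^+} V(t,u)$ is Borel on $\fKU$.

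The main obstacle is that this limsup is \emph{a priori} an uncountable operation, which in general does not preserve Borel measurability. To circumvent this, I plan to combine \eqref{H3i} with the following elementary fact: if $g \colon O \to \mathds{R}$ is lower semi-continuous on an open interval $O$ and $D$ is dense in $O$, then $\sup_O g = \sup_D g$. Indeed, $\sup_D g \leq \sup_O g$ is trivial, and conversely, for any $t\in O$ a sequence $t_k \in D$ with $t_k\to t$ gives $g(t) \leq \liminf_k g(t_k) \leq \sup_D g$ by lower semi-continuity.

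Applying this with $g(\cdot) = V(\cdot,u)$ (lower semi-continuous on $I$ by \eqref{H3i}) and $D = (t_0, t_0+1/n)\cap\mathds{Q}$, I obtain the crucial countable representation
\[
\limsup_{t \to t_0^+} V(t,u) \;=\; \inf_{n \geq N_0}\; \sup_{t \in (t_0, t_0+1/n)\cap\mathds{Q}} V(t,u),
\]
where $N_0$ is any positive integer large enough that $t_0 + 1/N_0 \in I$. By \eqref{H3iv}, for each fixed $t \in I$ the function $V(t,\cdot)$ is lower semi-continuous, and hence Borel, on $\fKU$. Thus the right-hand side is a countable infimum of countable suprema of Borel functions on $\fKU$, and is therefore Borel. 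Combining this with the Borel measurability of $V(t_0,\cdot)$, the set on the right-hand side of the displayed characterization is Borel in $\fKU$, and hence in $\mX$; by \eqref{H3vi} it coincides with $\KU$, which is thus a Borel subset of $\mX$.
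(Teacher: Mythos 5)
Your proof is correct and follows essentially the same route as the paper's: both use the $\limsup$ characterization from \eqref{H3vi}, exploit the lower semi-continuity of $V(\cdot,u)$ from \eqref{H3i} to replace the uncountable family of times by rationals, and invoke \eqref{H3ii} and \eqref{H3iv} for Borel measurability of $V(t,\cdot)$ and $V(t_0,\cdot)$ on $\fKU$. The paper phrases this as a countable intersection/union of sets $B_{n,m}$ rather than as measurability of the function $u\mapsto\limsup_{t\to t_0^+}V(t,u)$, but the two formulations are interchangeable.
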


\begin{proof}
It follows from the characterization of $\KU$ with the $\limsup$ in condition \eqref{H3vi} of \eqref{H3} that
\[\KU=\bigcap_n\bigcup_m B_{n,m},\]
where
\[B_{n,m}=\left\{u\in\fKU\,\left|\,V(t,u)\leq V(t_0,u)+\frac{1}{n},\,\forall t\in\left(t_0,t_0+\frac{1}{m}\right)\right.\right\}.\]
Then, using \eqref{H3i} of \eqref{H3}, each set $B_{n,m}$ can be written as
\[B_{n,m}=\bigcap_{q\in\mathds{Q}\cap\left(t_0,t_0+\frac{1}{m}\right)}\left\{u\in\fKU\,\left|\,V(q,u)\leq V(t_0,u)+\frac{1}{n}\right.\right\},\]
where $\mathds{Q}$ denotes the set of rational numbers. But since \eqref{H3ii} and \eqref{H3iv} imply that $V(t,\cdot)-V(t_0,\cdot)$ is a Borel function in $\fKU$, for every $t\in I$, we obtain that each set inside the intersection above is a Borel set in $\fKU$ and, consequently, $B_{n,m}$ is Borel in $\fKU$. Moreover, since $\fKU$ is a Borel set in $\mX$, then $B_{n,m}$ is also Borel in $\mX$. Thus, $\KU$ is a Borel set in $\mX$.
\end{proof}

We are now in a position to prove our main theorem, concerning the existence of $\mU$-trajectory statistical solutions, as given in Definition \ref{def-stat-sol}. Let us outline the main ideas of the proof.

Starting with an initial tight measure $\mu_0$, at a given time $t_0$, our intention is to show the existence of a measure $\rho$ which is a $\mU$-trajectory statistical solution satisfying the initial condition $\Pi_{t_0}\rho = \mu_0$. As in classical methods, this measure $\rho$ is obtained from the limit of a convergent net of measures.

We first consider the case when the initial measure $\mu_0$ is carried by a compact set. Then, using Lemma \ref{choquet}, we obtain a net $(\mu_0^\alpha)_\alpha$ of discrete measures converging to $\mu_0$ in the phase space and also satisfying inequality \eqref{eq0} with respect to the function $V_0$ defined in \eqref{defV0}. Using hypothesis \eqref{H1}, we can easily extend each discrete initial measure $\mu_0^\alpha$ to a discrete measure $\rho_\alpha$ in the trajectory space, by applying \eqref{H1} to each point in the support of $\mu_0^\alpha$. By construction, each $\rho_\alpha$ is a tight measure carried by $\fKU$, which by our hypothesis \eqref{H2} is a compact set. This implies that $(\rho_\alpha)_\alpha$ is a uniformly tight net and then Theorem \ref{reformulated} is applied to obtain a subnet converging to some tight measure $\rho$, also carried by $\fKU$. The fact that $\rho$ satisfies the initial condition, i.e. $\Pi_{t_0}\rho=\mu_0$, follows easily from the uniqueness of the limits in $\mM(X,t)$, guaranteed by Theorem \ref{hausdorff}. Then, in order to conclude that $\rho$ is a $\mU$-trajectory statistical solution, it remains to show that it is carried by a Borel subset of $\mU$. This is done by using the characterization of $\KU$ inside $\fKU$, given by hypothesis \eqref{H3vi} of \eqref{H3}, yielding that $\rho$ is in fact carried by $\KU$. The inequality \eqref{eq0}, enforced in the construction of the net $(\mu_0^\alpha)_\alpha$ is used precisely here.

The proof of the case when $\mu_0$ is not carried by any compact set, can be reduced to the previous case by using the hypothesis that $\mu_0$ is a tight measure. The idea consists in decomposing $\mu_0$ as a sum of tight measures, each being carried by a compact set. The previous case can then be applied to each of these tight measures, yielding a countable family of $\mU$-trajectory statistical solutions. Our desired measure is then obtained as an appropriate weighted sum of these particular measures.

There are some technical details that we skipped in the previous discussion and which are concerned with the restriction of the approximating measures to convenient compact subspaces. If we assumed that our underlying phase space was completely regular, the proof could be made a bit simpler, as these restrictions would no longer be necessary. But again, looking for a higher degree of generality, we assume only that our phase space $X$ is a Hausdorff space.

\begin{thm}\label{existence}
Let $X$ be a Hausdorff topological space and let $I$ be a real interval closed and bounded on the left with left end point $t_0$. If $\mU\subset \mX$ is a subset satisfying hypothesis $(H)$ then for any tight Borel probability measure $\mu_0$ on $X$ there exists a $\mU$-trajectory statistical solution $\rho$ on $\mX$ such that $\Pi_{t_0}\rho=\mu_0$.
\end{thm}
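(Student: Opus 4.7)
My strategy is to first dispose of the problem under the simplifying assumption that $\mu_0$ is carried by a compact set, then bootstrap to the general tight case. For the bootstrap, I would exhaust $X$ modulo a $\mu_0$-null set by nested compact sets $K_n$ with $\mu_0(K_n)\uparrow 1$, partition the union into disjoint Borel pieces $B_n\subset K_n$, normalize each restriction $\mu_0|_{B_n}/\mu_0(B_n)$ to a probability measure carried by a compact set, invoke the compact case to produce $\mU$-trajectory statistical solutions $\rho_n$ with the required initial projection, and form the weighted sum $\rho=\sum_n\mu_0(B_n)\rho_n$. Countable sums of tight measures of finite total mass are tight, and the Borel carriers $\mV_n\subset\mU$ supplied for each $\rho_n$ unite into a Borel subset $\bigcup_n\mV_n\subset\mU$ of full $\rho$-measure.

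\textbf{The compact case.} Assume $\mu_0$ is carried by a compact set $K\subset X$. The function $V_0$ from \eqref{defV0} is lower semi-continuous on $X$ by Lemma \ref{V0lowersemicont} and bounded on $K$ by \eqref{H3v}, so Lemma \ref{choquet} applied on $K$ with $f=V_0$ yields a net $(\mu_0^\alpha)_\alpha$ of discrete probability measures satisfying $\mu_0^\alpha\wconv\mu_0$ and $\mu_0^\alpha(V_0)\leq\mu_0(V_0)$. Writing $\mu_0^\alpha=\sum_i\lambda_i^\alpha\delta_{x_i^\alpha}$ with $x_i^\alpha\in K$, hypothesis \eqref{H1} supplies $u_i^\alpha\in\mU$ with $u_i^\alpha(t_0)=x_i^\alpha$, and the lift $\rho_\alpha=\sum_i\lambda_i^\alpha\delta_{u_i^\alpha}$ is a discrete (hence tight) probability measure on $\mX$ carried by $\KU$, whose closure $\fKU$ is compact in $\mX$ by \eqref{H2}. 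Uniform tightness of $(\rho_\alpha)_\alpha$ and Theorem \ref{reformulated} produce a subnet (relabelled) converging in the weak semi-continuity topology to some $\rho\in\mP(\mX,t)$, and Lemma \ref{portmanteau}(4) places $\rho$ on the closed carrier $\fKU$. Continuity of $\Pi_{t_0}$ gives $\Pi_{t_0}\rho_\alpha=\mu_0^\alpha\wconv\Pi_{t_0}\rho$, so uniqueness of limits in $\mP(X,t)$ (Theorem \ref{hausdorff}) forces $\Pi_{t_0}\rho=\mu_0$.

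\textbf{The carrier is inside $\KU$.} By Lemma \ref{KUBorel} this set is Borel, so it suffices to prove $\rho(\KU)=1$. For $t>t_0$ in $I$ close enough to $t_0$ that
\[M(t):=\sup_{u\in\KU}\bigl(V(t,u)-V(t_0,u)\bigr)\]
is finite, which \eqref{H3vii} guarantees, the inequality $V(t,u)-V(t_0,u)\leq M(t)$ holds on the support of every $\rho_\alpha$. Combining this with the identity $\rho_\alpha(V(t_0,\cdot))=\mu_0^\alpha(V_0)\leq\mu_0(V_0)=\rho(V(t_0,\cdot))$ (the last equality from $V(t_0,\cdot)=V_0\circ\Pi_{t_0}$, the boundedness of $V_0$ on $K$, and $\Pi_{t_0}\rho=\mu_0$), and applying Lemma \ref{portmanteau}(3) to the lower semi-continuous function $V(t,\cdot)-V(t_0,\cdot)$ on $\fKU$ (extending to unbounded lower semi-continuous functions by truncation, since the function is bounded below on the compact set by \eqref{H3iv}), I would obtain
\[\int_\mX\bigl(V(t,u)-V(t_0,u)\bigr)\rd\rho(u)\leq M(t).\]
Property \eqref{H3i} yields $\liminf_{t\to t_0^+}(V(t,u)-V(t_0,u))\geq 0$ pointwise, while \eqref{H3vii} provides a sequence $t_k\to t_0^+$ with $\liminf M(t_k)\leq 0$. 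Fatou's lemma then forces the nonnegative integrand $\liminf_k(V(t_k,u)-V(t_0,u))$ to vanish $\rho$-a.e., and the first characterization of $\KU$ in \eqref{H3vi} places $\rho$-almost every trajectory in $\KU$.

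\textbf{Main obstacle.} The delicate step is the last one: balancing the several one-sided semi-continuity conditions in \eqref{H3} so that the Choquet inequality $\mu_0^\alpha(V_0)\leq\mu_0(V_0)$ survives the passage to the weak semi-continuity limit, ensuring that the integral comparison with $M(t)$ makes sense even though $V(t,\cdot)$ need not be bounded above on $\fKU$, and finally extracting from it the almost-everywhere validity of the $\liminf$ condition that singles out $\KU$ inside $\fKU$.
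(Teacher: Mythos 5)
Your proposal is correct and follows essentially the same route as the paper's proof: the same Choquet-type discrete approximation carrying the $V_0$-integral inequality, the same lift via (H1) and Topsoe compactness on the compact set $\fKU$, the same Fatou/(H3vi)--(H3vii) argument to show the limit measure is carried by $\KU$, and the same countable decomposition into normalized restrictions to nested compacts for a general tight $\mu_0$. The only slip is describing $V(t,\cdot)-V(t_0,\cdot)$ as lower semi-continuous (a difference of two l.s.c.\ functions need not be), but this is immaterial since, exactly as in the paper, your argument really applies Lemma \ref{portmanteau}(3) to $V(t,\cdot)$ alone and disposes of the $V(t_0,\cdot)$ term through the Choquet inequality and the identity $\rho_\alpha(V(t_0,\cdot))=\mu_0^\alpha(V_0)$.
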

\begin{proof}
Let us first suppose that $\mu_0$ is carried by a compact subset $K\subset X$.

Consider the function $V_0:X\rightarrow \mathbb {R}$ defined by \eqref{defV0}. Lemma \ref{V0lowersemicont} implies that $V_0$ is a Borel function on $X$ and hence the restriction $V_0|_K$ to $K$ is a Borel function on $K$. Moreover, by condition \eqref{H3v} of \eqref{H3} we have that $V_0|_K$ is bounded on $K$, in particular, bounded from below (of course, this also follows from the fact that $V_0$ is lower semi-continuous on the compact set $K$). Then, applying Lemma \ref{choquet} to the compact space $K$ and with $V_0|_K$ in the place of $f$, we obtain a net $(\mu_0^\alpha)_\alpha$ of discrete measures in $\mP(K)$ such that $\mu_0^\alpha \wconv\mu_0|_K$ and
\be\label{eq5b}
\int_K V_0|_K(u_0)\rd \mu_0^\alpha(u_0)\leq\int_K V_0|_K(u_0)\rd\mu_0|_{K}(u_0),\quad\forall \alpha.
\ee
Since $(\mu_0^\alpha)_\alpha$ is a net of discrete measures then, for each $\alpha$, there exist $J_\alpha\in\mathbb{N}$, $\theta_j^\alpha\in\mathbb{R}$ and $u_{0,j}^\alpha\in K$ such that
\[0<\theta_j^{\alpha}\leq 1\,\,,\,\,\,\sum_{j=1}^{J_\alpha}\theta_j^{\alpha}=1,\]
and
\[\mu_0^\alpha=\sum_{j=1}^{J_\alpha}\theta_j^\alpha\delta_{u_{0,j}^\alpha}.\]
Furthermore, from \eqref{H1} it follows that for each $u_{0,j}^\alpha$ there exists $u_j^\alpha\in\mathcal{U}$ such that $\Pi_{t_0}u_j^\alpha=u_{0,j}^\alpha$. Then, let $\rho_\alpha$ be the measure in $X$ defined by
\[\rho_\alpha=\sum_{j=1}^{J_\alpha}\theta_j^\alpha\delta_{u_j^\alpha}.\]

Note that $\rho_\alpha$ belongs to $\mP(\mX,t)$ and is carried by $\KU$. Using that $\fKU$ is compact (see \eqref{H2}) and taking the restriction of each $\rho_\alpha$ to this set, we obtain that $(\rho_\alpha|_{\fKU})_\alpha$ is a net in $\mP(\fKU,t)$ which is clearly uniformly tight.\comment{If $\rho$ is a tight Borel measure on a topological space $X$, then its restriction $\rho|_A$ to any Borel set $A\in \mB(X)$, defined as
\[\rho|_A(B)=\rho(B\cap A)\,,\,\,\forall B\in\mB(A),\]
is also a tight measure on $A$, with respect to the induced topology. Indeed, consider $B\in\mB(A)$ and $\varepsilon>0$. Then $B$ can be written as $B'\cap A$ for $B'\in\mB(X)$, which implies that $B$ is also a Borel set in $X$. Therefore, by the tightness of $\rho$ in $X$ it follows that there exists a compact set $K$ of $X$ such that $K\subset B$ and $\rho(B)-\rho(K)<\varepsilon$. But
\[\rho|_A(B)-\rho|_A(K)=\rho(B)-\rho(K)<\varepsilon\]
Therefore, $\rho|_A$ is a tight Borel measure on $A$.} Then, by Theorem \ref{reformulated} there is a measure $\rho$ in $\mP(\fKU,t)$ such that, by passing to a subnet if necessary,
\be\label{eq6}
\rho_\alpha|_{\fKU}\wconv \rho\mbox{ in }\fKU.
\ee

Then, defining $\tilde{\rho}\in\mM(\mX)$ as
\[\tilde{\rho}(A)=\rho(A\cap\fKU)\,,\,\,\forall A \in\mB(\mX),\]
it is not difficult to see that $\tilde{\rho}\in\mP(\mX,t)$ and $\rho_\alpha\wconv\tilde{\rho}$ in $\mX$.
\comment{If $X$ is a topological space and $\rho$ is a tight Borel measure defined on a Borel subset $A$ of $X$, then its extension $\tilde{\rho}$ to $X$, defined by
\[\tilde{\rho}(B)=\rho(B\cap A)\,,\,\,\forall B\in\mB(X),\]
is a tight Borel measure on $X$. Indeed, consider $B\in\mB(X)$ and $\varepsilon>0$. Then, since $B\cap A\in\mB(X)$, from the tightness of $\rho$ on $\mB(A)$ it follows that there exists a set $K\subset B\cap A\subset B$ which is compact in $A$ and such that $\rho(B\cap A) - \rho(K)<\varepsilon$. Thus,
\[\tilde{\rho}(B)-\tilde{\rho}(K)=\rho(B\cap A)-\rho(K\cap A)=\rho(B\cap A)-\rho(K)<\varepsilon.\] Since $K$ is also compact in $X$, this means that $\tilde\rho$ is tight.}\comment{Moreover, if $(\rho_\alpha)_\alpha$ is a net in $\mM(\mX)$ and there exists a compact set $\mK\subset\mX$ and a measure $\rho\in\mM(\mK)$ such that $\rho_\alpha|_\mK\wconv \rho$ in $\mK$ and each $\rho_\alpha$ is carried by $\mK$, then $\rho_\alpha\wconv \tilde{\rho}$ in $\mX$. In order to prove this, consider a bounded and upper semi-continuous function $\varphi:\mX\rightarrow\mathds{R}$. Then $\varphi|_\mK$ is evidently bounded on $\mK$. Furthermore, given $u\in\mK$ and $\varepsilon>0$, since $\varphi$ is upper semi-continuous on $\mX$, there exists a neighborhood $\mW$ of $u$ in $\mX$ such that
\[\varphi(v)\leq\varphi(u)+\varepsilon\,,\,\,\forall v\in \mW,\]
which implies that
\[\varphi|_\mK(v)\leq\varphi|_\mK(u)+\varepsilon\,,\,\,\forall v\in \mW\cap\mK.\]
Since $\mW\cap\mK$ is a neighborhood of $u$ in $\mK$, then $\varphi|_\mK$ is also upper semi-continuous on $\mK$. Thus, since $\rho_\alpha$ and $\tilde{\rho}$ are carried by $\mK$, by Lemma \ref{portmanteau} we obtain that
\[\limsup_\alpha\int_\mX\varphi\rd \rho_\alpha=\limsup_\alpha\int_\mK\varphi|_\mK\rd \rho_\alpha|_\mK\]
\[\leq\int_\mK\varphi|_\mK\rd\rho=\int_\mX\varphi\rd\tilde{\rho}.\]
By Lemma \ref{portmanteau} again, we conclude that $\rho_\alpha\wconv\tilde{\rho}$ in $\mX$.}

Since $\Pi_{t_0}:\mX\rightarrow X$ is continuous, using \eqref{eq00} and Lemma \ref{portmanteau} one then obtains that $\Pi_{t_0}\rho_\alpha\wconv\Pi_{t_0}\tilde{\rho}$ in $X$. Moreover, taking the restrictions of these measures to the compact $K$, we also obtain that $\Pi_{t_0}\rho_\alpha|_K\wconv\Pi_{t_0}\tilde{\rho}|_K$. On the other hand, we have by construction that
\[\Pi_{t_0}\rho_\alpha|_K=\mu_0^\alpha\wconv\mu_0|_K.\]
Adding this to the fact that $\Pi_{t_0}\tilde{\rho}|_K,\mu_0|_K\in\mP(K,t)$, by Theorem \ref{hausdorff} we obtain that $\Pi_{t_0}\tilde{\rho}|_K=\mu_0|_K$. But since $\Pi_{t_0}\tilde{\rho}$ and $\mu_0$ are carried by $K$ we then get that $\Pi_{t_0}\tilde{\rho}=\mu_0$.

Furthermore, due to hypothesis \eqref{H3}, we can show that $\tilde{\rho}$ is carried by $\Pi_{t_0}^{-1}K\cap\mU$. Indeed, consider $\varepsilon>0$. Then \eqref{H3vii} implies the existence of a sequence $\{t_j\}$ in $I$ such that $t_j\rightarrow t_0^+$ and
\be\label{eq20}\sup_{u\in\Pi_{t_0}^{-1}K\cap\mU}(V(t_j,u)-V(t_0,u))\leq\varepsilon\,,\,\,\forall j.\ee

Note that
\begin{eqnarray}\label{eq21}
\int_{\fKU}\liminf_{t\rightarrow t_0^+}V(t,u)\rd\tilde{\rho}(u)&\leq&\int_{\fKU}\liminf_j V(t_j,u)\rd\tilde{\rho}(u)\nonumber\\
&\leq&\liminf_j\int_{\fKU}V(t_j,u)\rd\tilde{\rho}(u),
\end{eqnarray}
where the second inequality follows by Fatou's Lemma.

By \eqref{H3v} and \eqref{eq20}, it follows that $V(t_j,\cdot)$ is a function bounded from above on $\KU$.  Since $V(t_j,\cdot)$ is lower semi-continuous on $\mX$ (hypothesis \eqref{H3iv} of \eqref{H3}), this boundedness from above extends to the closure $\fKU$. Moreover, the lower semi-continuity of $V(t_j,\cdot)$ on $\fKU$ and the fact that $\fKU$ is a compact set (hypothesis \eqref{H2}), imply that $V(t_j,\cdot)$ is also bounded from below on $\fKU$. Then, since $\rho_\alpha\wconv\tilde{\rho}$, it follows by Lemma \ref{portmanteau} that
\be\label{eq22}\int_{\fKU}V(t_j,u)\rd\tilde{\rho}(u)\leq\liminf_{\alpha}\int_{\KU}V(t_j,u)\rd\rho_\alpha(u),\ee
where in the right hand side of the inequality above we have used the fact that $\rho_\alpha$ is carried by $\KU$.
Now by \eqref{eq20}, we have that
\begin{multline}\label{eq23}
\int_{\KU}V(t_j,u)\rd\rho_\alpha(u) \\
  \leq \int_{\KU}(V(t_0,u)+\varepsilon)\rd\rho_\alpha(u)
=\int_{\KU}V_0|_K(\Pi_{t_0}u)\rd\rho_\alpha(u)+\varepsilon.
\end{multline}
By \eqref{eq00}, we obtain that
\be\label{eq24}\int_{\KU}V_0|_K(\Pi_{t_0}u)\rd\rho_\alpha(u)=\int_{K}V_0|_K(u_0)\rd\mu_0^\alpha(u_0).\ee
Since the right hand side of \eqref{eq24} does not depend on $j$, we obtain, putting together the inequalities \eqref{eq21} to \eqref{eq24}, that
\be
\label{eq1000}
\int_{\fKU}\liminf_{t\rightarrow t_0^+}V(t,u)\rd\tilde{\rho}(u) \leq \liminf_{\alpha} \int_{K}V_0|_K(u_0)\rd\mu_0^\alpha(u_0) + \varepsilon.
\ee

Now, from \eqref{eq5b}, we have
\be\label{eq25}\liminf_\alpha\int_{K}V_0|_K(u_0)\rd\mu_0^\alpha(u_0)\leq\int_{K}V_0|_K(u_0)\rd\mu_0|_{K}(u_0)=\int_{X}V_0(u_0)\rd\mu_0(u_0),\ee
where the equality follows from the fact that $\mu_0$ is carried by $K$.

But
\begin{eqnarray}\label{eq4}
\int_{X}V_0(u_0)\rd\mu_0(u_0)&=&\int_XV_0(u_0)\rd\Pi_{t_0}\tilde{\rho}(u_0)\nonumber\\
&=&\int_{\mX}V_0(\Pi_{t_0}u)\rd\tilde{\rho}(u)\nonumber\\
&=&\int_{\fKU}V(t_0,u)\rd\tilde{\rho}(u).
\end{eqnarray}
Then, from \eqref{eq1000} to \eqref{eq4}, we conclude that
\[\int_{\fKU}\liminf_{t\rightarrow t_0^+}V(t,u)\rd\tilde{\rho}(u)\leq\int_{\fKU}V(t_0,u)\rd\tilde{\rho}(u)+\varepsilon.\]
Therefore, since $\varepsilon$ is an arbitrary positive number, we find that
\begin{equation}\label{eq5}
\int_{\fKU}\liminf_{t\rightarrow t_0^+}[V(t,u)-V(t_0,u)]\rd\tilde{\rho}(u)\leq 0.
\end{equation}
On the other hand, since $V(\cdot,u)$ is lower semi-continuous at $t_0$, for every $u\in\fKU$, then
\be\label{eq26}\liminf_{t\rightarrow t_0^+}[V(t,u)-V(t_0,u)]\geq 0.\ee
Hence \eqref{eq5} and \eqref{eq26} imply that
\[\liminf_{t\rightarrow t_0^+}[V(t,u)-V(t_0,u)]= 0\,\,\,\,\tilde{\rho}\text{-a.e. on }\fKU.\]
By the characterization with the $\liminf$ in \eqref{H3vi} of \eqref{H3}, we then conclude that $\tilde{\rho}$ is carried by the Borel set $\KU$ (see Lemma \ref{KUBorel}). Then, we have proved that $\tilde{\rho}$ is a $\mU$-trajectory statistical solution with initial condition $\Pi_{t_0}\tilde{\rho}=\mu_0$, in the particular case that $\mu_0$ is carried by a compact subset $K\subset X$.

Now let us prove the case when $\mu_0$ is not carried by any compact subset of $X$. In this case, since $\mu_0$ is a tight measure, there exists a sequence $\{K_n\}$ of compact subsets of $X$ such that
\[\mu_0(K_{n+1})>\mu_0(K_n)>0\,,\,\,\forall n\in\mathds{N},\]
and
\begin{equation}\label{eq3}
\mu_0(X\backslash K_n)<\frac{1}{n}\,,\,\,\forall n\in\mathds{N}.
\end{equation}
Moreover, we may assume that $K_n\subset K_{n+1}$, for all $n$.

Let $D_1=K_1$ and $D_n=K_n\backslash K_{n-1}$, for every $n\geq 2$. Note that
\[\mu_0\left(X\backslash\bigcup_j D_j\right)=\mu_0\left(X\backslash\bigcup_j K_j\right)\leq\mu_0(X\backslash K_n)<\frac{1}{n},\]
for all $n\in\mathbb{N}$. Thus, taking the limit as $n\rightarrow\infty$ above, we obtain that $\mu_0$ is carried by $\bigcup_j D_j$. Then, for every $A\in\mathcal{B}(X)$, since the sets $D_j$, $j\in \mathds{N}$, are pairwise disjoint, we have
\[\mu_0(A)=\mu_0\left(A\cap\left(\bigcup_j D_j\right)\right)=\sum_{j=1}^\infty\mu_0(A\cap D_j).\]

So we may decompose $\mu_0$ as
\[\mu_0=\sum_j\mu_0(D_j)\mu_0^j,\]
where $\mu_0^j$ is the Borel probability measure defined as
\[\mu_0^j(A)=\frac{\mu_0(A\cap D_j)}{\mu_0(D_j)}\,,\,\,\forall A\in\mathcal{B}(X).\]
Note that each $\mu_0^j$ is well-defined, since $\mu_0(D_1)=\mu_0(K_1)>0$ and
\[\mu_0(D_j)=\mu_0(K_j)-\mu_0(K_{j-1})>0\,,\,\,\forall j\geq 2.\]

Also, since each $\mu_0^j$ is carried by the compact set $K_j$, using the first part of the proof, we obtain a tight Borel probability measure $\rho_j$ carried by $\Pi_{t_0}^{-1}K_j\cap\mathcal{U}$ and such that $\Pi_{t_0}\rho_j=\mu_0^j$.

Let $\rho$ be the Borel probability measure defined by
\[\rho=\sum_j\mu_0(D_j)\rho_j.\]

Observe that
\[\rho\left(\bigcup_l \Pi_{t_0}^{-1}K_l\cap\mathcal{U}\right)=\sum_j\mu_0(D_j)\rho_j(\Pi_{t_0}^{-1}K_j\cap\mathcal{U})=\sum_j\mu_0(D_j)=1,\]
where the first equality follows from the fact that $\rho_j$ is carried by $\Pi_{t_0}^{-1}K_j\cap\mU$. Thus, $\rho$ is carried by $\bigcup_j \Pi_{t_0}^{-1}K_j\cap\mathcal{U}$, which is a Borel set in $\mX$ and is contained in $\mU$. The fact that $\Pi_{t_0}\rho=\mu_0$ is also easily verified.

It only remains to show that $\rho$ is a tight measure. In order to prove so, consider a Borel set $\mA\in\mathcal{B}(\mX)$ and $\varepsilon>0$. Let $n\in\mathbb{N}$ be such that $1/n < \varepsilon/2$. Then, since $\rho_j$ is a tight measure, for each $1\leq j\leq n$ there exists a compact set $\mK_j^n\subset \mA$ such that
\[\rho_j(\mA\backslash \mK_j^n)<\frac{\varepsilon}{2n}.\]
Let $\mK^n=\bigcup_{1\leq j\leq n}\mK_j^n$. Note that
\begin{eqnarray*}
\rho(\mA\backslash \mK^n)&=&\sum_{j=1}^\infty\mu_0(D_j)\rho_j(\mA\backslash \mK^n)\\
&\leq&\sum_{j=1}^n\rho_j(\mA\backslash \mK^n)+\sum_{j=n+1}^\infty\mu_0(D_j)\\
&<&\frac{\varepsilon}{2}+\mu_0(X\backslash K_n).
\end{eqnarray*}
Thus, according to \eqref{eq3} and the choice of $n$, it follows that $\rho(\mA\backslash \mK^n)<\varepsilon$. Since $\mK^n$ is a compact set in $\mX$, this proves that $\rho$ is tight.
\end{proof}

\begin{rmk}
Notice that given an initial tight Borel probability measure $\mu_0$ on $X$, if $\mu_0$ is carried by a compact set $K$ on $X$, then the $\mU$-trajectory statistical solution $\rho$ with $\Pi_{t_0}\rho = \mu_0$ obtained in the proof of Theorem \ref{existence} is carried by the Borel set $\Pi_{t_0}^{-1} K \cap \mU$. If $\mu_0$ is not carried by any compact set, then given any sequence of compact subsets $K_n$ of $X$, $n\in \mathds{N}$, such that $\mu(X\setminus K_n) \rightarrow 0$, as $n\rightarrow \infty$, a $\mU$-trajectory statistical solution $\rho$ with $\Pi_{t_0}\rho = \mu_0$ can be constructed such that it is carried by the Borel set $\mU \cap (\bigcup_n \Pi_{t_0}^{-1} K_n)$.
\end{rmk}

Although we only consider in this work the concept of trajectory statistical solutions, which concerns a measure $\rho$ defined over the trajectory space $\mC_{loc}(I,X)$, it is also of interest to develop an abstract definition of time-dependent statistical solutions, generalizing the results obtained in \cite{FRT2010,FRT}. This latter notion considers a family of measures defined over the phase space $X$ and parametrized by the time variable $t$. One such family of measures is easily constructed by projecting the measure $\rho$ on the phase space, at each time $t$, to yield the measures $\mu_t = \Pi_t\rho$ defined on $X$. A question that naturally arises is whether this family of projections is an abstract time-dependent statistical solution, in some suitable sense. The family $\{\mu_t\}_{t\in I}$ has many of the properties one expects from a notion of statistical solution in phase space. The key step in connecting it to an evolution equation, however, is an equation for the moments
\[ \frac{\rd}{\rd t} \int_X \varphi(u) \;\rd\mu_t(u),
\]
which involves an equation for the distribution $\partial_t u$. This needs in particular that the space $X$ be a topological vector space, or a subset of such a space. Moreover, for a sufficiently general result, minimal conditions must be imposed on the regularity of the evolution equation. These results are currently under investigation and will be presented in a future work.

\section{Application to the B\'enard Problem}
\label{secapplication}

The formulation in Section \ref{secabstract} is applicable to a wide range of equations related to incompressible viscous fluids, starting with the three-dimensional incompressible Navier-Stokes equations, and extending to the equations of magnetohydrodynamics, thermohydraulics, geofluidynamics, and other systems coupled with the Navier-Stokes equations.

In order to illustrate the applicability of the formulation, we consider, in this section, the B\'enard problem, which models a phenomenon of convection in fluids, and consisting of the Navier-Stokes equations coupled with an equation for the temperature via the Boussinesq approximation \cite{batchelor,lesieur}.

We shall analyze the three-dimensional case for a homogeneous and incompressible fluid in the region $\{(x_1,x_2,x_3)\in\mathds{R}^3\,|\,0<x_3<h\}$. At the lower surface $x_3=0$, the fluid is heated at a constant temperature $T_0$, while at the upper surface $x_3=h$, the fluid is at a temperature $T_1<T_0$, also constant. Let $\{\bfe_1,\bfe_2,\bfe_3\}$ be the canonical orthonormal basis in $\mathds{R}^3$. Then, through the Boussinesq approximation one obtains the following equations describing the evolution of the velocity field $\bu$, the pressure $p$ and the temperature $T$:

\be\label{eq27}\frac{\partial\bu}{\partial t}+(\bu\cdot\nabla)\bu-\nu\Delta\bu+\nabla p=g\alpha (T-T_1)\bfe_3,\ee
\be\label{eq28}\frac{\partial T}{\partial t}+(\bu\cdot\nabla)T-\kappa\Delta T=0,\ee
\be\label{eq29}\nabla\cdot\bu=0,\ee
where $g$ is the acceleration of gravity, $\alpha$ is the volume-expansion coefficient of the fluid, $\nu$ is the kinematic viscosity and $\kappa$ is the coefficient of thermometric conductivity.

We also consider zero velocity field at the boundaries $x_3=0$ and $x_3=h$ and periodic boundary conditions in the directions $x_1$ and $x_2$, so that the boundary conditions for problem \eqref{eq27}-\eqref{eq29} are given as

\be\label{eq30}\bu=0\quad \text{at} \quad x_3=0\quad \text{and} \quad x_3=h,\ee
\be\label{eq31}T=T_0\quad \text{at} \quad x_3=0,\quad T=T_1\quad \text{at}\quad x_3=h, \ee
\be\label{eq32}p,\bu,T\quad \text{are periodic in the $x_1$ and $x_2$ directions},\ee
where the last condition means that
\[\psi(x_1+L_1,x_2,x_3)=\psi(x_1,x_2,x_3),\quad\forall (x_1,x_2,x_3)\in \mathds{R}^2\times(0,h)\]
\[\psi(x_1,x_2+L_2,x_3)=\psi(x_1,x_2,x_3),\quad\forall (x_1,x_2,x_3)\in \mathds{R}^2\times(0,h)\]
for some positive real numbers $L_1$ and $L_2$, and $\psi$ being any of the functions in condition \eqref{eq32}.

In order to simplify the analysis of the problem, we define a background temperature $T_{b,\varepsilon}$, given by
\[T_{b,\varepsilon}(x_1,x_2,x_3)=\begin{cases}0, & \text{for }0\leq x_3 <h-\varepsilon\medskip\\
                          \displaystyle\frac{(T_1-T_0)}{\varepsilon}(x_3-h+\varepsilon), & \text{for }h-\varepsilon\leq x_3\leq h,\end{cases}\]
where $\varepsilon$ is a positive real number which is chosen appropriately later. Then, we introduce a change of variables for the temperature by considering $\theta=T-T_0-T_{b,\varepsilon}$, in terms of which, problem \eqref{eq27}-\eqref{eq29} is rewritten as

\be\label{eq33}\frac{\partial\bu}{\partial t}+(\bu\cdot\nabla)\bu-\nu\Delta\bu+\nabla p=g\alpha\theta\bfe_3+g\alpha (T_{b,\varepsilon}+T_0-T_1)\bfe_3,\ee
\be\label{eq34}\frac{\partial\theta}{\partial t}+(\bu\cdot\nabla)\theta-\kappa\Delta\theta=-(\bu\cdot\nabla)T_{b,\varepsilon}+\kappa\Delta T_{b,\varepsilon},\ee
\be\label{eq35}\nabla\cdot\bu=0,\ee
with the following boundary conditions
\[\bu=0\quad \text{at} \quad x_3=0\quad \text{and} \quad x_3=h,\]
\[\theta=0\quad \text{at} \quad x_3=0\quad \text{and} \quad x_3=h,\]
\[p,\bu,\theta\quad \text{are periodic in the $x_1$ and $x_2$ directions}.\]
Note that the boundary conditions for the temperature in the $x_3$ direction are now zero, justifying the introduction of this new variable. In fact, there are many possible choices for this background temperature. The one we use here has been chosen so as to yield uniform in time \textit{a priori} estimates.

Let us now introduce the function spaces which are necessary in the following analysis. Consider the domain $\Omega=(0,L_1)\times(0,L_2)\times(0,h)$ and define
\[\mV_1=\{\bv=\bw|_\Omega\,:\,\bw\in(\mD(\mathds{R}^2\times(0,h)))^3,\,\nabla\cdot \bw=0,\,\]
\[\bw\text{ is $L_1$-periodic in the $x_1$ direction and $L_2$-periodic in the $x_2$ direction}\},\]
and
\[\mV_2=\{\theta=\phi|_\Omega\,:\,\phi\in\mD(\mathds{R}^2\times(0,h)),\,\text{$\phi$ is $L_1$-periodic in the $x_1$ direction and}\]
\[\text{$L_2$-periodic in the $x_2$ direction}\},\]
where $\mD(\mathds{R}^2\times(0,h))$ denotes the set of infinitely differentiable and compactly supported functions in $\mathds{R}^2\times(0,h)$.

Then, let $V_1$ be the closure of $\mV_1$ with respect to the $(H_0^1(\Omega))^3$ norm and $H_1$ be the closure of $\mV$ with respect to the $(L^2(\Omega))^3$ norm. Also, let $V_2$ be the closure of $\mV_2$ with respect to the $H_0^1(\Omega)$ norm and $H_2$ be the closure of $\mV_2$ with respect to the $L^2(\Omega)$ norm. We then define the Hilbert spaces $V=V_1\times V_2$ and $H=H_1\times H_2$.

The inner product and norm in $V_1$ are defined as
\[(\!(\bv,\tilde{\bv})\!)_1=\sum_{i,j=1}^3\int_\Omega\nabla v_i\cdot\nabla \tilde{v}_j\rd x,\quad \forall \bv,\tilde{\bv}\in V_1,\]
\[\|\bv\|_1=(\!(\bv,\bv)\!)_1^{1/2},\quad \forall\bv\in V_1.\]
Similarly for $V_2$,
\[(\!(\theta,\tilde{\theta})\!)_2=\int_\Omega \nabla\theta\cdot\nabla\tilde{\theta}\rd x,\quad \forall \theta,\tilde{\theta}\in V_2,\]
\[\|\theta\|_2=(\!(\theta,\theta)\!)_2^{1/2},\quad \forall\theta \in V_2.\]
Then, we define the following inner product and norm in the product space $V=V_1\times V_2$:
\[(\!(z,\tilde{z})\!)_V=(\!(\bv,\tilde{\bv})\!)_1+\gamma(\!(\theta,\tilde{\theta})\!)_2,\quad\forall z=(\bv,\theta),\,\,\tilde{z}=(\tilde{\bv},\tilde{\theta})\in V,\]
\[\|z\|_V=(\!(z,z)\!)_V^{1/2},\quad \forall z\in V,\]
where $\gamma$ is a positive parameter making the above definition dimensionally correct. Like $\varepsilon$, the parameter $\gamma$ is chosen appropriately later.

Similarly, the inner products and norms of the spaces $H_1$ and $H_2$ are the usual ones from $(L^2(\Omega))^3$ and $L^2(\Omega)$, and are denoted respectively by $(\cdot,\cdot)_1$ and $(\cdot,\cdot)_2$, with norms $|\cdot|_1$ and $|\cdot|_2$. The inner product and norm in the space $H$ are then defined accordingly:
\[(z,\tilde{z})_H=(\bv,\tilde{\bv})_1+\gamma(\theta,\tilde{\theta})_2,\quad\forall z=(\bv,\theta),\,\,\tilde{z}=(\tilde{\bv},\tilde{\theta})\in H,\]
\[|z|_H=(z,z)_H^{1/2},\quad \forall z\in H.\]

We identify $H_1$ and $H_2$ with their respective duals and consider the dual spaces $V_1'$ and $V_2'$ of $V_1$ and $V_2$, respectively, so that $V_1\subset H_1=H_1'\subset V_1'$ and $V_2\subset H_2=H_2' \subset V_2'$, with continuous and dense injections.

In the product space, we characterize the dual of $V$ as the space $V'=V_1'\times V_2'$, with the duality product between $h=(\f,g)\in V'$ and $z=(\bu,\theta)\in V$ given by
\[ \langle h,z \rangle = \langle \f,\bu\rangle_1 + \gamma \langle g,\theta \rangle_2,
\]
where $\langle \cdot,\cdot \rangle_i$ denotes the duality product in $V_i$, $i=1,2$. With this representation, the usual norm for an element $h=(\f,g)$ in the dual space $V'=V_1'\times V_2'$ can also be written in the form
\begin{equation}
  \label{norminvprime}
  \|h\|_{V'} = \sqrt{\|\f\|_{V_1'}^2 + \gamma\|g\|_{V_2'}^2},
\end{equation}
where $\|\cdot\|_{V_i'}$ denotes the usual norm of the dual space $V_i'$, $i=1,2$.

Similarly, $H$ is identified with its dual $H'=H_1'\times H_2'=H_1\times H_2$ with a norm analogous to \eqref{norminvprime}, and we have the continuous and dense injections $V\subset H=H'\subset V$.

We rewrite the system \eqref{eq33}-\eqref{eq35} in the following functional form
\be\label{eq37}\frac{\rd\bu}{\rd t}+\nu A_1\bu+B_1(\bu,\bu)=g\alpha \theta\bfe_3+g\alpha (T_{b,\varepsilon}+T_0-T_1)\bfe_3,\quad\text{in $V_1'$},\ee
\be\label{eq38}\frac{\rd\theta}{\rd t}+\kappa A_2\theta+B_2(\bu,\theta)=-(\bu\cdot\nabla)T_{b,\varepsilon}+\kappa\Delta T_{b,\varepsilon},\quad\text{in $V_2'$},\ee
where $B_1(\cdot,\cdot):V_1\times V_1\rightarrow V_1'$ and $B_2(\cdot,\cdot):V_1\times V_2\rightarrow V_2'$ are the bilinear operators defined by duality as
\[\langle B_1(\bu,\bv),\bw\rangle_1=((\bu\cdot\nabla)\bv,\bw)_1 \quad \forall \bu,\;\bv,\;\bw \in V_1,\]
\[ \langle B_2(\bv,\theta),\tilde\theta\rangle_2=((\bv\cdot\nabla)\theta,\tilde\theta)_2 \quad \forall \bv\in V_1,\;\forall \theta,\; \tilde\theta\in V_2.\]
Furthermore, $A_1:V_1\rightarrow V_1'$ and $A_2:V_2\rightarrow V_2'$ are the linear operators defined by duality according to
\[\langle A_1\bv,\tilde\bv \rangle_1 = (\!(\bv,\tilde\bv)\!)_1, \quad \forall \bv,\;\tilde\bv \in V_1,\]
\[\langle A_2\theta,\tilde\theta \rangle_2 = (\!(\theta,\tilde\theta)\!)_2, \quad \forall \theta,\;\tilde\theta \in V_2.\]
Both these operators can be seen as positive and self-adjoint closed operators with compact inverse when restricted to their domain $D(A_j)$ in $H_j$, $j=1,2$, given by
\[D(A_1)=\{\bv \in V_1; \; A_1\bv \in H_1\}, \]
\[D(A_2)=\{\theta \in V_2; \; A_2\theta \in H_2\}. \]
We let $\lambda_1$ and $\lambda_2$ denote the smallest eigenvalues of each of these operators and set $\lambda_0 = \min\{\lambda_1,\lambda_2\}$.

We can also write equations \eqref{eq37}-\eqref{eq38} in the compact form
\[\frac{\rd z}{\rd t}+Az+B(z,z)+Rz=0\quad\text{in }V',\]
where
\[z=(\bu,\theta),\]
\[Az=(\nu A_1\bu,\kappa A_2\theta),\]
\[Rz=(-g\alpha \theta\bfe_3-g\alpha (T_{b,\varepsilon}+T_0-T_1)\bfe_3,(\bu\cdot\nabla)T_{b,\varepsilon}-\kappa\Delta T_{b,\varepsilon})\]
and
\[B(z,\tilde{z})=(B_1(\bu,\tilde{\bu}),B_2(\bu,\tilde{\theta})),\quad \forall z=(\bu,\theta),\,\tilde{z}=(\tilde{\bu},\tilde{\theta}).\]

The following definition provides a notion of weak solution to the problem \eqref{eq33}-\eqref{eq35}. We denote by $H_w$ the space $H$ endowed with the weak topology.

\begin{defs}\label{def-sol-benard}
Let $I\subset\mathds{R}$ be an interval. We say that $z=(\bu,\theta)$ is a \textit{weak solution of system \eqref{eq33}-\eqref{eq35}} if
\begin{itemize}
\item [(i)] $z\in L^2_{\text{loc}}(I;V)\cap L^\infty_{\text{loc}}(I;H)\cap\Cloc(I,H_w)$;
\item [(ii)] $\partial_t z\in L^{4/3}_{\text{loc}}(I;V')$;
\item [(iii)] $z$ satisfies
\[\frac{\rd z}{\rd t}+Az+B(z,z)+Rz=0\quad\text{in }V'\]
in the sense of distributions on $I$.
\item [(iv)] For almost every $t'\in I$, $z=(\bu,\theta)$ satisfies the following energy inequalities
\be\label{benard-ener2}\frac{1}{2}|\bu(t)|_1^2+\nu\int_{t'}^t\|\bu(s)\|_1^2\rd s\leq \frac{1}{2}|\bu(t')|_1^2+\int_{t'}^t(g\alpha \theta(s)\bfe_3,\bu(s))_1\rd s\ee
\begin{eqnarray}\label{benard-ener1}\frac{1}{2}|\theta(t)|_2^2+\kappa\int_{t'}^t\|\theta(s)\|_2^2\rd s&\leq& \frac{1}{2}|\theta(t')|_2^2+\int_{t'}^t\langle(\bu\cdot\nabla)T_{b,\varepsilon},\theta\rangle_2\rd s\nonumber\\
&&+\kappa\int_{t'}^t\langle\Delta T_{b,\varepsilon},\theta\rangle_2\rd s,
\end{eqnarray}
for every $t\in I$ with $t>t'$. The set of times $t'$ for which \eqref{benard-ener2} and \eqref{benard-ener1} are valid can be characterized as the points of strong continuity from the right of $\bu$ and $\theta$, and they form a set of total measure in $I$.
\item [(v)]\label{Bv} If $I$ is closed and bounded on the left, with left end point $t_0$, then $z$ is strongly continuous at $t_0$ from the right, i.e. $z(t)\rightarrow z(t_0)$ in $H$ as $t\rightarrow t_0^+$.
\end{itemize}
\end{defs}

For any $R\geq 0$, let $B_H(R)$ be the closed ball with radius $R$ in $H$ and denote by $B_H(R)_w$ the closed ball endowed with the weak topology. Based on Definition \ref{def-sol-benard}, we consider the following trajectory spaces associated to the B\'enard problem:

\begin{align}
\mU_I & =\{z\in\Cloc(I,H_w)\,|\,z\text{ is a weak solution on }I\}, \label{mUI} \\
\mU_I(R) & =\{z\in\Cloc(I,B_H(R)_w)\,|\,z\text{ is a weak solution on }I\}, \\
\mU_I^\sharp & =\{z\in\Cloc(I,H_w)\,|\,z\text{ is a weak solution on }\mathring{I}\}, \\
\mU_I^\sharp(R) & =\{z\in\Cloc(I,B_H(R)_w)\,|\,z\text{ is a weak solution on }\mathring{I}\},
\end{align}
where $\mathring{I}$ denotes the interior of the interval $I$.

By choosing $\gamma$ sufficiently large and $\varepsilon$ sufficiently small, one obtains suitable estimates for the weak solutions in $\mU_I$.

\begin{prop}\label{benard-est}
Let $I\subset\mathds{R}$ be an interval and let $z\in\mU_I$. Suppose $\gamma$ satisfies
\be\label{eq47}\gamma>\frac{4(g\alpha)^2}{\nu\kappa\lambda_0^2}\ee
and $\varepsilon$ satisfies
\be\label{eq48}0 < \varepsilon^2<\frac{\nu}{\gamma(T_0-T_1)^2}\left(\frac{\kappa}{4}-\frac{(g\alpha)^2}{\gamma\nu\lambda_0^2}\right).\ee
Then, for every $t'\in I$ for which \eqref{benard-ener2} and \eqref{benard-ener1} are valid, and for every $t\in I$ with $t>t'$, the following estimates hold
\be\label{eq39}|z(t)|_H^2\leq |z(t')|_H^2\Exp^{-\eta\lambda_0(t-t')}+2\frac{\kappa\gamma}{\eta\lambda_0}\frac{L_1L_2}{\varepsilon}(T_1-T_0)^2(1-\Exp^{-\eta\lambda_0(t-t')}),\ee
\be\label{eq40}\int_{t'}^t\|z(s)\|_V^2\rd s\leq \frac{1}{\eta}|z(t')|_H^2+2\frac{\kappa\gamma}{\eta}\frac{L_1L_2}{\varepsilon}(T_1-T_0)^2(t-t'),\ee
\be\label{eq41}\left(\int_{t'}^t\|\partial_t z(s)\|_{V'}^{4/3}\rd s\right)^{3/4}\leq \frac{C}{(\nu\kappa)^{3/8}}|z(t')|_H^2 + C \frac{(\nu\kappa)^{1/8}}{\lambda_0^{3/2}}(t-t')+C\frac{(\nu\kappa)^{5/8}}{\lambda_0^{1/2}},\ee
where $\eta=\min\{\nu,\kappa\}$ and $C$ is a nondimensional constant which depends on the parameters $\nu$, $\kappa$, $\lambda_0$, $g$, $\alpha$, $T_0$, $T_1$, $\gamma$, $\varepsilon$, $L_1$ and $L_2$ through nondimensional combinations of them.
\end{prop}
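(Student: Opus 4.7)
My plan is to combine the two energy inequalities from Definition \ref{def-sol-benard}(iv) into a single energy inequality for $z=(\bu,\theta)$ weighted according to the inner product in $V$, and then absorb all the cross terms using the conditions \eqref{eq47} and \eqref{eq48} on $\gamma$ and $\varepsilon$. Specifically, I add \eqref{benard-ener2} to $\gamma$ times \eqref{benard-ener1} to obtain
\[
\tfrac{1}{2}|z(t)|_H^2 + \int_{t'}^t\bigl(\nu\|\bu\|_1^2 + \gamma\kappa\|\theta\|_2^2\bigr)\rd s \leq \tfrac{1}{2}|z(t')|_H^2 + \int_{t'}^t\mathcal{R}(s)\rd s,
\]
where $\mathcal{R}$ collects the buoyancy term $(g\alpha\theta\bfe_3,\bu)_1$, the transport term $\gamma\langle(\bu\cdot\nabla)T_{b,\varepsilon},\theta\rangle_2$, and the source term $\gamma\kappa\langle\Delta T_{b,\varepsilon},\theta\rangle_2$.

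For the buoyancy term I would use Cauchy--Schwarz together with Poincar\'e ($|\bu|_1\leq\lambda_0^{-1/2}\|\bu\|_1$, $|\theta|_2\leq \lambda_0^{-1/2}\|\theta\|_2$) and Young's inequality to obtain a bound of the form $(\nu/4)\|\bu\|_1^2 + (g\alpha)^2(\nu\lambda_0^2)^{-1}\|\theta\|_2^2$. For the transport term, the key trick is a Hardy-type inequality: since $u_3$ and $\theta$ vanish at $x_3=h$, for any function $f$ with $f(h)=0$ one has $\int_{h-\varepsilon}^h|f|^2\rd x_3 \leq \varepsilon^2\int_{h-\varepsilon}^h|\partial_3 f|^2\rd x_3$. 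Combined with $|\partial_3 T_{b,\varepsilon}|=(T_0-T_1)/\varepsilon$ on the slab $\{h-\varepsilon<x_3<h\}$, this yields $\gamma|\langle(\bu\cdot\nabla)T_{b,\varepsilon},\theta\rangle_2|\leq \gamma\varepsilon(T_0-T_1)\|\bu\|_1\|\theta\|_2$, which by Young's inequality becomes $(\nu/4)\|\bu\|_1^2 + \gamma^2\varepsilon^2(T_0-T_1)^2\nu^{-1}\|\theta\|_2^2$. For the source term, integrating by parts against $\theta\in V_2$ gives $\gamma\kappa|\langle\Delta T_{b,\varepsilon},\theta\rangle_2|\leq \gamma\kappa|\nabla T_{b,\varepsilon}|_2\|\theta\|_2 \leq (\gamma\kappa/4)\|\theta\|_2^2 + \gamma\kappa L_1L_2(T_0-T_1)^2/\varepsilon$, where $|\nabla T_{b,\varepsilon}|_2^2 = L_1L_2(T_0-T_1)^2/\varepsilon$. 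Adding everything up, the conditions \eqref{eq47} and \eqref{eq48} are precisely what is needed so that the remaining coefficient of $\|\theta\|_2^2$ after subtraction is nonnegative, leaving at least $(\eta/2)\|z\|_V^2$ on the left (with $\eta=\min\{\nu,\kappa\}$) plus a constant forcing $C_1:=\gamma\kappa L_1L_2(T_0-T_1)^2/\varepsilon$ on the right.

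From the resulting differential inequality $|z(t)|_H^2 + \eta\int_{t'}^t\|z\|_V^2\rd s \leq |z(t')|_H^2 + 2C_1(t-t')$, two lines give \eqref{eq40}, and applying the Poincar\'e bound $\|z\|_V^2\geq \lambda_0|z|_H^2$ turns it into the scalar ODE inequality $(d/dt)|z|_H^2 + \eta\lambda_0|z|_H^2 \leq 2C_1$, whose integrating-factor integration yields \eqref{eq39}. For \eqref{eq41}, I would bound each summand in $\partial_t z = -Az - B(z,z) - Rz$ in $V'$: the linear term satisfies $\|Az\|_{V'}\lesssim \|z\|_V$, the forcing $Rz$ is controlled by $|z|_H$ plus constants depending on $T_{b,\varepsilon}$, and the critical bilinear term obeys the standard 3D estimate $\|B(z,z)\|_{V'}\lesssim |z|_H^{1/2}\|z\|_V^{3/2}$, so $\|B(z,z)\|_{V'}^{4/3}\lesssim |z|_H^{2/3}\|z\|_V^2$. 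Integrating the $4/3$-powers over $(t',t)$, taking the $3/4$-power, and plugging in \eqref{eq39} and \eqref{eq40} produces the bound in \eqref{eq41}.

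The main obstacle I anticipate is the careful bookkeeping needed to absorb the $T_{b,\varepsilon}$ terms: the Hardy inequality on the thin slab is what makes \eqref{eq48} the right condition, and one must make sure the Young-inequality splits are chosen so that the remaining coefficient of $\|\theta\|_2^2$ after all subtractions matches exactly the slack provided by \eqref{eq48}. The 3D trilinear estimate in step for \eqref{eq41} is standard but must be applied in a way that integrates to a finite quantity under only $L^2_{\rm loc}(I;V)\cap L^\infty_{\rm loc}(I;H)$ regularity, which is why the natural exponent is $4/3$ and not $2$.
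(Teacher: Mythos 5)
Your proposal is correct, and in fact the paper gives no proof of Proposition \ref{benard-est} at all: it only remarks that the argument is standard and that the background flow and the parameters $\gamma$, $\varepsilon$ were chosen following the treatment of the B\'enard problem in \cite{CRT2004}. Your reconstruction is exactly the computation those choices are designed for --- the Hardy-type inequality on the slab $\{h-\varepsilon<x_3<h\}$ gives the $\gamma\varepsilon(T_0-T_1)\|\bu\|_1\|\theta\|_2$ bound on the transport term, and your Young splits leave the coefficient of $\|\theta\|_2^2$ nonnegative precisely under \eqref{eq47}--\eqref{eq48}, with the forcing constant $\gamma\kappa L_1L_2(T_0-T_1)^2/\varepsilon$ reproducing the constants in \eqref{eq39}--\eqref{eq40}. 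The only point to be slightly careful about is that the energy relations hold in integral rather than differential form, so \eqref{eq39} should be derived via the integral (rather than ODE) version of the Gronwall argument, but this is routine.
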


The \textit{a priori} estimates \eqref{eq39}-\eqref{eq41} allow us to prove the existence of weak solutions of the initial-value problem associated to system \eqref{eq33}-\eqref{eq35} in the sense of Definition \ref{def-sol-benard}. The proof follows in a way similar to the classical result of existence of Leray-Hopf weak solutions of the Navier-Stokes equations \cite{Lady,bookcf1988,temam84}. The choice of the background flow and of the parameters $\gamma$ and $\varepsilon$ were based on the formulation given in \cite{CRT2004} for the B\'enard problem in two dimensions (see also \cite{Kapustyan} for the three-dimensional case in which the boundary conditions are, however, fully homogeneous, with the flow driven instead by a forcing distributed within the domain). We then obtain the following result.

\begin{thm}\label{existweaksolbenard}
Let $z_0\in H$. Then there exists at least one weak solution of problem \eqref{eq33}-\eqref{eq35} in the sense of Definition \ref{def-sol-benard} satisfying $z(t_0)=z_0$.
\end{thm}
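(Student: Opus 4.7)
The plan is to follow the classical Galerkin scheme, adapting the standard Leray--Hopf construction for the three-dimensional Navier--Stokes equations to the coupled Navier--Stokes/temperature system in the variables $z=(\bu,\theta)$. Let $\{\bfe_j^1\}_{j\in\mathds{N}}$ and $\{\bfe_j^2\}_{j\in\mathds{N}}$ denote orthonormal bases of $H_1$ and $H_2$ consisting of eigenfunctions of $A_1$ and $A_2$ respectively, and let $P_n^1$ and $P_n^2$ be the associated orthogonal projections onto the span of the first $n$ eigenfunctions. For each $n$, the plan is to solve the finite-dimensional system
\begin{align*}
  \tfrac{\rd}{\rd t}\bu_n + \nu A_1\bu_n + P_n^1 B_1(\bu_n,\bu_n) &= P_n^1\bigl(g\alpha\theta_n\bfe_3 + g\alpha(T_{b,\varepsilon}+T_0-T_1)\bfe_3\bigr),\\
  \tfrac{\rd}{\rd t}\theta_n + \kappa A_2\theta_n + P_n^2 B_2(\bu_n,\theta_n) &= P_n^2\bigl(-(\bu_n\cdot\nabla)T_{b,\varepsilon}+\kappa\Delta T_{b,\varepsilon}\bigr),
\end{align*}
with initial data $\bu_n(t_0)=P_n^1\bu_0$, $\theta_n(t_0)=P_n^2\theta_0$. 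Since the nonlinearities are locally Lipschitz and the linear part is of ODE type in finite dimension, standard Cauchy-Lipschitz theory yields a local solution, which extends globally in forward time thanks to the a priori bounds produced next.

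Next, testing the Galerkin system against $\bu_n$ and $\gamma\theta_n$ and proceeding exactly as in Proposition \ref{benard-est}, with the same choice of $\gamma$ and $\varepsilon$, one gains the analogues of \eqref{eq39}--\eqref{eq41} for $z_n=(\bu_n,\theta_n)$, uniformly in $n$. This yields uniform bounds
\[
  (z_n)_n\text{ bounded in }L^\infty_{\text{loc}}(I;H)\cap L^2_{\text{loc}}(I;V),\qquad (\partial_t z_n)_n\text{ bounded in }L^{4/3}_{\text{loc}}(I;V').
\]
By Banach-Alaoglu one extracts a subsequence (still labeled $z_n$) converging weakly-$*$ in $L^\infty_{\text{loc}}(I;H)$, weakly in $L^2_{\text{loc}}(I;V)$, and with $\partial_t z_n$ weakly convergent in $L^{4/3}_{\text{loc}}(I;V')$, to some limit $z$. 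An application of the Aubin-Lions compactness lemma, using the compact embedding $V\hookrightarrow H\hookrightarrow V'$, provides strong convergence of $z_n$ to $z$ in $L^2_{\text{loc}}(I;H)$, which is enough to pass to the limit in the quadratic nonlinearities $B_1(\bu_n,\bu_n)$, $B_2(\bu_n,\theta_n)$, as well as in the linear transport term $(\bu_n\cdot\nabla)T_{b,\varepsilon}$ and the buoyancy coupling $g\alpha\theta_n\bfe_3$. This produces conditions (i)--(iii) of Definition \ref{def-sol-benard}, together with $z(t_0)=z_0$; weak continuity $z\in\Cloc(I,H_w)$ is then standard from the equation and the bound on $\partial_t z$ in $L^{4/3}_{\text{loc}}(I;V')$.

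To obtain (iv), one starts from the Galerkin energy identity obtained by testing against $\bu_n$ and $\gamma\theta_n$, integrates over $[t',t]$, and passes to the limit using weak lower semi-continuity of the $V$-norm in $L^2$ and the strong $L^2$ convergence in $H$; the set of $t'$ for which the resulting inequalities \eqref{benard-ener2}--\eqref{benard-ener1} hold is characterized as the set of right-strong-continuity points in the usual Leray-Hopf manner, and has full measure in $I$. The main delicate point, and the step I expect to be the main obstacle, is (v): strong right-continuity at the initial time $t_0$ (not just at a.e. interior point). The key observation is that, by construction, $z_n(t_0)=(P_n^1\bu_0,P_n^2\theta_0)\to z_0$ strongly in $H$, so the Galerkin energy identity at $t_0$ passes to the limit with \emph{equality} on the initial-data side. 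Combined with weak lower semi-continuity, this yields the energy inequality starting precisely from $t_0$:
\[
   |z(t)|_H^2 + 2\int_{t_0}^t\!\bigl(\nu\|\bu(s)\|_1^2 + \kappa\gamma\|\theta(s)\|_2^2\bigr)\rd s \leq |z_0|_H^2 + 2\int_{t_0}^t\!\Phi(s)\rd s,
\]
where $\Phi$ collects the linear coupling contributions and is in $L^1_{\text{loc}}$. Taking $\limsup$ as $t\to t_0^+$ gives $\limsup_{t\to t_0^+}|z(t)|_H \leq |z_0|_H$, while the already-established weak continuity gives $|z_0|_H\leq\liminf_{t\to t_0^+}|z(t)|_H$ by weak lower semi-continuity of the $H$-norm. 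Hence $|z(t)|_H\to|z_0|_H$, which together with weak convergence in the Hilbert space $H$ upgrades to strong convergence in $H$, establishing (v) and completing the proof.
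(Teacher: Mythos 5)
Your proposal is correct and follows exactly the route the paper indicates: the paper gives no detailed proof, stating only that existence follows from the a priori estimates of Proposition \ref{benard-est} ``in a way similar to the classical result of existence of Leray-Hopf weak solutions,'' and your Galerkin construction---including the key step of obtaining the energy inequality from $t_0$ itself via the strong convergence $z_n(t_0)\to z_0$, and upgrading weak to strong right-continuity at $t_0$ through norm convergence---is precisely that classical argument adapted to the coupled system. No gaps; this is a faithful expansion of the proof the paper defers to \cite{Lady,bookcf1988,temam84}.
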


Estimates \eqref{eq39}-\eqref{eq41} also imply the following result (see \cite{FRT2010,FRT} for the analogous result in the Navier-Stokes equations case).

\begin{prop}
\label{mUIRsharp}
The set $\mU_I^\sharp(R)$ is a compact and metrizable space and it is the closure of the set $\mU_I(R)$ with respect to the topology of $\Cloc(I,H_w)$.
\end{prop}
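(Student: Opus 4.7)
\emph{Metrizability and compactness.} The plan has three parts: metrizability, compactness via Arzel\`a--Ascoli together with closedness, and the identification $\mU_I^\sharp(R)=\overline{\mU_I(R)}$. Since $H$ is separable, $B_H(R)_w$ is a compact metrizable space; fix a compatible metric $d_w$. Because $I$ is $\sigma$-compact, $\Cloc(I, B_H(R)_w)$ is metrizable in the compact-open topology, and $\mU_I^\sharp(R)$ is therefore automatically metrizable. For compactness I would apply Arzel\`a--Ascoli: pointwise relative compactness is immediate, and equicontinuity follows from \eqref{eq41}. For any $z\in \mU_I^\sharp(R)$ and any compact $J\subset I$, pick $t'\in J$ among the full-measure set of strong right-continuity points and apply H\"older's inequality to obtain $\|z(t)-z(s)\|_{V'}\le C(J)|t-s|^{1/4}$ uniformly in $z$. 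Because $V\hookrightarrow H$ is compact (Rellich), so is $H\hookrightarrow V'$; hence on the bounded set $B_H(R)$ the $V'$-norm topology coincides with the weak topology of $H$ (i.e.\ with $d_w$), and the $V'$-equicontinuity transfers to $d_w$-equicontinuity.

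\emph{Closedness.} Given $z_n\to z$ in $\Cloc(I, B_H(R)_w)$ with $z_n\in \mU_I^\sharp(R)$, the uniform bounds \eqref{eq39}--\eqref{eq41} together with the Aubin--Lions lemma yield strong subsequential convergence $z_n\to z$ in $L^2_{\mathrm{loc}}(\mathring I, H)$. This is enough to pass to the limit in the weak formulation on $\mathring I$---linear terms by weak continuity, the bilinear $B(z_n,z_n)$ by strong $L^2(H)$ convergence---while the energy inequalities pass by lower semicontinuity. Thus $z\in \mU_I^\sharp(R)$, yielding compactness together with the Arzel\`a--Ascoli bound.

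\emph{Identity with the closure.} One inclusion is immediate from closedness. For the reverse, fix $z\in \mU_I^\sharp(R)$ and pick $\tau_n\downarrow 0$ such that each $t_0+\tau_n$ is a strong right-continuity point of $z$, available by Definition \ref{def-sol-benard}(iv). Set $\tilde z_n(t):=z(t+\tau_n)$ wherever $t+\tau_n\in I$; by time-translation invariance $\tilde z_n$ is a weak solution on its domain, strongly right-continuous at $t_0$ by the choice of $\tau_n$. If $I$ has a right endpoint, I would extend $\tilde z_n$ forward to all of $I$ via Theorem \ref{existweaksolbenard} from its last value; invoking \eqref{eq39}, provided $R$ is at least the absorbing value, the extension stays in $B_H(R)$ and $\tilde z_n\in \mU_I(R)$. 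Convergence $\tilde z_n\to z$ in $\Cloc(I, B_H(R)_w)$ is then checked on each compact $J\subset I$: on the shifted portion, uniform $d_w$-continuity of $z$ on a slight enlargement of $J$ yields $d_w$-uniform convergence; on the vanishing extension window, the $V'$-H\"older estimate from \eqref{eq41} keeps both $\tilde z_n(t)$ and $z(t)$ $d_w$-close to $z(\sup I)$.

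The main expected obstacle is precisely the extension step when $I$ is right-closed and $R$ lies below the absorbing value from \eqref{eq39}, where a more refined argument is required---exploiting the vanishing length $\tau_n$ of the extension window to control the $H$-norm growth by an amount going to zero with $n$, so that the concatenated $\tilde z_n$ still lies in $\mU_I(R)$ (or at least in $\mU_I(R+\varepsilon_n)$ with $\varepsilon_n\to 0$, which suffices by a diagonalization).
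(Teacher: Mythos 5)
The paper itself gives no proof of this proposition: it is stated as a consequence of the estimates \eqref{eq39}--\eqref{eq41} with a pointer to \cite{FRT2010,FRT}, where the analogous Navier--Stokes result is proved by essentially the strategy you follow --- metrizability of $\Cloc(I,B_H(R)_w)$, Arzel\`a--Ascoli plus the $L^{4/3}_{\text{loc}}(V')$ bound on $\partial_t z$ for equicontinuity in the $V'$-metric (which on $B_H(R)$ metrizes the weak topology of $H$), Aubin--Lions for closedness, and time translation for the density of $\mU_I(R)$. Those parts of your argument are sound, up to one small point worth making explicit: when the compact set $J$ contains $t_0$, the estimate \eqref{eq41} is only available for admissible $t'>t_0$, so you must let $t'\to t_0^+$, using that the right-hand side of \eqref{eq41} depends on $t'$ only through $|z(t')|_H\le R$, to obtain the uniform $V'$-H\"older bound down to $t_0$ for elements of $\mU_I^\sharp(R)$.

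The genuine gap is in the last step, and it sits exactly where you locate it. When $I$ is bounded on the right and $R$ is below the absorbing value, neither of your two proposed fixes closes the argument. The concatenated trajectory need not remain in $B_H(R)$: the energy inequality \eqref{benard-ener2} allows $|z(t)|_H^2$ to grow at a strictly positive rate because of the buoyancy term, so if $|z(\sup I)|_H=R$ the extension can leave the ball immediately, no matter how short the window is. And producing $\tilde z_n\in\mU_I(R+\varepsilon_n)$ with $\varepsilon_n\to 0$ and $\tilde z_n\to z$ only shows that $z$ lies in $\bigcap_{\varepsilon>0}\overline{\mU_I(R+\varepsilon)}$; this intersection is not obviously equal to $\overline{\mU_I(R)}$, and the ``diagonalization'' you invoke would require approximating each $\tilde z_n$ by elements of $\mU_I(R)$, which is precisely the statement being proved. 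So either the inclusion $\mU_I^\sharp(R)\subset\overline{\mU_I(R)}$ must be obtained by a different device for such $R$ and $I$, or the proposition should be read --- as it is in fact used in Lemma \ref{lem1}, where only $R\ge R_0$ together with the inclusion $\overline{\mU_I(R)}\subset\mU_I^\sharp(R)$ and the compactness of $\mU_I^\sharp(R)$ are needed --- with $R$ at least the absorbing radius, in which case your translation-and-extension argument goes through because $B_H(R)$ is then positively invariant by \eqref{eq39}.
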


Let us now consider $I\subset\mathds{R}$ as an interval closed and bounded on the left with left end point $t_0$. Define
\[R_0=2\frac{\kappa\gamma}{\eta\lambda_0}\frac{L_1L_2}{\varepsilon}(T_1-T_0)^2.\]
If $R\geq R_0$ and $z(t_0)\in B_H(R)$, it follows from \eqref{eq39} with $t=t_0$ that
\[z(t)\in B_H(R)\,,\,\,\forall t\geq t_0.\]
Thus, $z\in\mU_I(R)$.

We now proceed to show that the abstract framework developed in the previous section is valid for the B\'enard problem. We consider $X$ as $H_w$ and the abstract set of trajectories $\mU$ as the set of weak solutions $\mU_I$ defined in \eqref{mUI}.

First, observe that hypothesis (\ref{H1}) of Definition \ref{hypothesisH} is readily verified by the set $\mU_I$ thanks to Theorem \ref{existweaksolbenard}.

The following results aim at proving the validity of the remaining hypotheses, \eqref{H2} and \eqref{H3}, for the set $\mU_I$. The next result proves \eqref{H2}.

\begin{lem}\label{lem1}
If $K\subset H_w$ is a compact subset (in the weak topology of $H$), then $\Pi_{t_0}^{-1}K\cap\mU_I$ is relatively compact in $\Cloc(I,H_w)$.
\end{lem}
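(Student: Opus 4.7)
The plan is to reduce the assertion to the compactness result already recorded in Proposition \ref{mUIRsharp}, by using the uniform \textit{a~priori} bound \eqref{eq39} to confine every trajectory starting in $K$ to a single ball $B_H(R')$ of $H$. First I would observe that a weakly compact subset of the Hilbert space $H$ is norm-bounded (weakly compact $\Rightarrow$ weakly bounded $\Rightarrow$ norm-bounded, by the uniform boundedness principle). Consequently, there exists $R>0$ such that $K\subset B_H(R)$.

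Next, for any $z\in \Pi_{t_0}^{-1}K\cap \mU_I$, condition (v) of Definition \ref{def-sol-benard} guarantees strong continuity of $z$ from the right at $t_0$, which is precisely the characterization recalled in Definition \ref{def-sol-benard}(iv) of the points at which the energy inequalities hold. Thus I can apply \eqref{eq39} with $t'=t_0$. Since $|z(t_0)|_H\le R$, this yields, for every $t\in I$ with $t\ge t_0$,
\[
|z(t)|_H^2 \;\le\; R^2\,\Exp^{-\eta\lambda_0(t-t_0)} + 2\frac{\kappa\gamma}{\eta\lambda_0}\frac{L_1L_2}{\varepsilon}(T_1-T_0)^2\bigl(1-\Exp^{-\eta\lambda_0(t-t_0)}\bigr) \;\le\; R'^{\,2},
\]
where $R'^{\,2}:=R^2 + 2\kappa\gamma L_1 L_2(T_1-T_0)^2/(\eta\lambda_0\varepsilon)$ is independent of $z$ and of $t$. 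In particular $z(t)\in B_H(R')$ for every $t\in I$, so $\Pi_{t_0}^{-1}K\cap\mU_I\subset \mU_I(R')$.

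Finally, I would invoke Proposition \ref{mUIRsharp}, which states that $\mU_I^\sharp(R')$ is compact in $\Cloc(I,H_w)$ and that $\mU_I(R')$ is dense in $\mU_I^\sharp(R')$ with respect to this topology. In particular $\mU_I(R')$ is relatively compact in $\Cloc(I,H_w)$, and therefore so is its subset $\Pi_{t_0}^{-1}K\cap\mU_I$. The main point to keep in mind, and the only step that is not immediate from already proved results, is the uniform-in-$z$ character of the bound $R'$; this is precisely where the careful choice of $\gamma$ and $\varepsilon$ in \eqref{eq47}--\eqref{eq48} pays off, as it yields the exponentially damped form of \eqref{eq39} and thus a bound that depends only on the radius of $K$ (not on the length of $I$).
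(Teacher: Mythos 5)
Your proof is correct and follows essentially the same route as the paper's: bound $K$ in a ball $B_H(R')$, use the \textit{a priori} estimate \eqref{eq39} at $t'=t_0$ (legitimate by condition (v) of Definition \ref{def-sol-benard}) to get $\Pi_{t_0}^{-1}K\cap\mU_I\subset\mU_I(R')$, and conclude via the compactness of $\mU_I^\sharp(R')=\overline{\mU_I(R')}$ from Proposition \ref{mUIRsharp}. The only cosmetic difference is that the paper absorbs the asymptotic term by taking $R\geq R_0$ with $K\subset B_H(R)$ rather than summing the two contributions as you do; both give the required uniform bound.
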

\begin{proof}
Given a compact subset $K\subset H_w$, let $R\geq R_0$ be sufficiently large  such that $K\subset B_H(R)$. Then, if $z\in\KUI$, it follows in particular that $\Pi_{t_0}z\in K\subset B_H(R)$. Thus, from inequality \eqref{eq39}, $z\in\mU_I(R)$. We then have
\[\KUI\subset\mU_I(R).\]
Taking the closure of this set with respect to the topology of $\Cloc(I,H_w)$ and using Proposition \ref{mUIRsharp} we find that
\[\fKUI\subset \overline{\mU_I(R)} = \mU_I^\sharp(R).\]
Then, since $\mU_I^\sharp(R)$ is compact in $\Cloc(I,H_w)$, we conclude that $\fKUI$ is also compact in this space.
\end{proof}

Now let $V: I\times \Cloc(I,H_w)\rightarrow \mathbb R$ be the function defined by
\be\label{eq46}V(t,z)=\begin{cases}\displaystyle\frac{1}{t-t_0}\int_{t_0}^{t}|z(s)|_H^2ds, & \text{for } t\neq t_0\medskip\\ |z(t_0)|_H^2, & \text{for } t=
t_0.\end{cases}\ee

We shall prove that this specific function $V$ satisfies all the conditions required by hypothesis \eqref{H3}. In fact, $V$ is even more regular, as property \eqref{i} of Lemma \ref{lem2} below shows, since it yields that $V$ is a lower semi-continuous function on $I\times\Cloc(I,H_w)$, implying immediately conditions \eqref{H3i}, \eqref{H3ii} and \eqref{H3iv} of \eqref{H3}. Properties \eqref{ii} and \eqref{iii} of Lemma \ref{lem2} prove conditions \eqref{H3iii} e \eqref{H3v} of Hypothesis \eqref{H3}.

\begin{lem}\label{lem2}The function $V$, defined in \eqref{eq46}, satisfies the following properties
\renewcommand{\theenumi}{\roman{enumi}}
\begin{enumerate}
\item \label{i} $V$ is lower semi-continuous on $I\times\Cloc(I,H_w)$;
\item \label{ii} $V(t_0,z)=V(t_0,\tilde{z})$, for every $z,\tilde{z}\in\Cloc(I,H_w)$ such that $z(t_0)=\tilde{z}(t_0)$;
\item \label{iii} For every compact $K\subset X$, $V(t_0,\cdot)$ is bounded on $\Pi_{t_0}^{-1}K\cap\mU_{I}$;
\end{enumerate}
\end{lem}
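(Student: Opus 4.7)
Properties \eqref{ii} and \eqref{iii} are immediate. For \eqref{ii}, $V(t_0, z) = |z(t_0)|_H^2$ depends only on $z(t_0)$ by definition. For \eqref{iii}, a weakly compact set $K \subset H_w$ is norm-bounded by the uniform boundedness principle (weakly compact subsets of a Hilbert space are bounded), so $V(t_0, z) = |z(t_0)|_H^2$ is bounded by the squared norm-radius of $K$ on $\Pi_{t_0}^{-1}K \cap \mU_I$. The substantive content is \eqref{i}: joint lower semi-continuity of $V$ on $I \times \Cloc(I, H_w)$.

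For \eqref{i}, I fix $(t^*, z^*)$ and a sequence $(t_n, z_n) \to (t^*, z^*)$ and show $\liminf_n V(t_n, z_n) \geq V(t^*, z^*)$ by a case analysis on whether $t^* > t_0$ or $t^* = t_0$. When $t^* > t_0$, I apply Fatou's Lemma to the integrands $h_n(s) = |z_n(s)|_H^2 \chi_{[t_0, t_n]}(s)$. Convergence in the compact-open topology of $\Cloc(I, H_w)$ gives pointwise weak convergence $z_n(s) \rightharpoonup z^*(s)$ (since each $\Pi_s$ is continuous from $\mX$ into $H_w$), and weak lower semi-continuity of $|\cdot|_H^2$ on $H$ yields $|z^*(s)|_H^2 \leq \liminf_n |z_n(s)|_H^2$ pointwise; since $t_n \to t^*$, $\chi_{[t_0, t_n]}(s) = \chi_{[t_0, t^*]}(s)$ for large $n$ whenever $s \neq t^*$. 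Fatou then gives $\int_{t_0}^{t^*} |z^*(s)|_H^2 \rd s \leq \liminf_n \int_{t_0}^{t_n} |z_n(s)|_H^2 \rd s$, and dividing by $t_n - t_0 \to t^* - t_0 > 0$ produces the required inequality.

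When $t^* = t_0$, the Fatou argument collapses because $1/(t_n - t_0)$ diverges and there is no uniform control of $|z_n(s)|_H^2$ near $t_0$. The key is to replace the integrand by its mean: for $t_n > t_0$, Jensen's inequality gives $V(t_n, z_n) \geq |\bar z_n|_H^2$, where $\bar z_n \in H$ is the element determined by $(\bar z_n, \psi)_H = (t_n - t_0)^{-1}\int_{t_0}^{t_n}(z_n(s), \psi)_H \rd s$ for all $\psi \in H$ (the right-hand side defines a bounded linear functional of norm at most $\sup_{s \in [t_0, t_n]}|z_n(s)|_H < \infty$, so Riesz representation applies). For each fixed $\psi$, the uniform convergence $(z_n(s), \psi)_H \to (z^*(s), \psi)_H$ on a compact subinterval containing $[t_0, t_n]$ (from compact-open convergence in $\mX$), combined with the continuity of $s \mapsto (z^*(s), \psi)_H$ at $t_0$, yields $(\bar z_n, \psi)_H \to (z^*(t_0), \psi)_H$; hence $\bar z_n \rightharpoonup z^*(t_0)$, and weak lower semi-continuity of $|\cdot|_H^2$ gives $\liminf_n |\bar z_n|_H^2 \geq |z^*(t_0)|_H^2 = V(t_0, z^*)$. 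The subsequence with $t_n = t_0$ is handled directly from $V(t_n, z_n) = |z_n(t_0)|_H^2$ and weak LSC applied to $z_n(t_0) \rightharpoonup z^*(t_0)$. The main technical obstacle is precisely this $t^* = t_0$ case: the piecewise definition of $V$ at $t_0$ defeats the natural Fatou approach, and the Jensen-plus-Riesz reformulation is what bridges the gap.
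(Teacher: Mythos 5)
Your items \eqref{ii} and \eqref{iii} are fine and essentially identical to the paper's. The problem is in \eqref{i}: you verify only \emph{sequential} lower semi-continuity, by fixing a sequence $(t_n,z_n)\to(t^*,z^*)$. But $H_w$ is an infinite-dimensional Hilbert space with its weak topology, hence not first countable, and the constant paths embed $H_w$ homeomorphically into $\Cloc(I,H_w)$, so $I\times\Cloc(I,H_w)$ is not first countable either. On such a space, sequential lower semi-continuity does not imply lower semi-continuity (i.e.\ openness of the sets $V^{-1}((c,\infty))$), and the topological version is what the lemma claims and what the abstract framework actually consumes: condition \eqref{H3ii} is used in Lemma \ref{V0lowersemicont} through a genuine neighborhood argument on all of $\mX$, and Lemma \ref{KUBorel} needs $V(t,\cdot)$ to be Borel. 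So as written the proof of \eqref{i} establishes a strictly weaker property than the one asserted.

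The gap is not repaired by mechanically replacing sequences with nets. Your $t^*=t_0$ argument (Jensen plus Riesz representation plus weak convergence of the averages $\bar z_\alpha$) does survive the upgrade to nets, and it is a nice observation. But the $t^*>t_0$ case rests on Fatou's Lemma applied to $h_n(s)=|z_n(s)|_H^2\chi_{[t_0,t_n]}(s)$, and Fatou's Lemma is false for nets of nonnegative measurable functions (e.g.\ the net of indicators of finite subsets of $[0,1]$, directed by inclusion, has pointwise limit $1$ but all integrals equal to $0$). The paper avoids both difficulties at once by a purely topological device: it introduces the finite-rank Galerkin projections $P_k$ and the functions $V_k(t,z)$ obtained from $V$ by replacing $|z(s)|_H^2$ with $|P_kz(s)|_H^2$; each $V_k$ is jointly \emph{continuous} on $I\times\Cloc(I,H_w)$ because $|P_kz(s)|_H^2$ depends on finitely many weakly continuous coordinates, and $V_k\nearrow V$ pointwise by monotone convergence, so $V$ is a supremum of continuous functions and hence lower semi-continuous on the whole (non-metrizable) space. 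If you want to keep your structure, you would at minimum need to (a) prove $V(t_0,\cdot)$ is LSC by exhibiting $|\cdot|_H^2$ as a supremum of weakly continuous functions, and (b) replace the Fatou step by some monotone-approximation argument of this kind --- at which point you have essentially reconstructed the paper's proof.
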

\begin{proof}

For every $k\in\mathds{N}$, let $P_k^i:H_i\rightarrow H_i$ be the Galerkin projector onto the first $k$ modes of the operator $A_i$. Then let $P_k:H\rightarrow H$ be defined by
\[P_k z=(P_k^1 \bu,P_k^2\theta)\,,\,\,\forall z=(\bu,\theta)\in H.\]
Now for every $k\in\mathds{N}$, consider the function $V_k:I\times\Cloc(I,H_w)\rightarrow\mathds{R}$ defined by
\[V_k(t,z)=\begin{cases}\displaystyle\frac{1}{t-t_0}\int_{t_0}^{t}|P_k z(s)|_H^2\rd s,
 & \text{for } t\neq t_0,\medskip\\
|P_k z(t_0)|_H^2, & \text{for } t=
t_0.\end{cases}\]
Using that  $z\mapsto|P_k z(t)|_H^2$ is a continuous function on $\Cloc(I,H_w)$, for every $t\in I$, and that $t\mapsto|P_k z(t)|_H^2$ is a continuous function on $I$, for every $z\in\Cloc(I,H_w)$, one can show that $V_k$ is a continuous function on $I\times\Cloc(I,H_w)$. Then, since
\[V_k(s,\tilde{z}) \nearrow V(s,\tilde{z})\,,\,\,\forall(s,\tilde{z})\in I\times\Cloc(I,H_w),\]
and each $V_k$ is in particular a lower semi-continuous function on $I\times\Cloc(I,H_w)$, then $V$ is also a lower semi-continuous function on this space (see \cite[Lemma 2.41]{AB}).

Item \eqref{ii} follows directly from the definition of $V$.

Finally, for the proof of item \eqref{iii}, consider a compact set $K\subset H_w$ and let $R\geq 0$ be such that $K\subset B_H(R)$. Then,
\[V(t_0,z)=|z(t_0)|_H^2\leq R\,,\,\,\forall z\in \Pi^{-1}_{t_0}K\cap \mU_I.\]
\end{proof}

The next lemma proves condition \eqref{H3vi} of Hypothesis \eqref{H3}.
\begin{lem}\label{lem3}
For all compact $K\subset H_w$, the trajectory space $\mU_I$ has the following
characterizations,
\begin{eqnarray*}
\Pi_{t_0}^{-1}K\cap\mU_I&=&\left\{z\in\overline{\Pi_{t_0}^{-1}K\cap\mU_I}\,|\,\liminf_{t\rightarrow t_0^+}V(t,z)\leq V(t_0,z)\right\}\\
&=&\left\{z\in\overline{\Pi_{t_0}^{-1}K\cap\mU_I}\,|\,\limsup_{t\rightarrow t_0^+}V(t,z)\leq V(t_0,z)\right\}.
\end{eqnarray*}
\end{lem}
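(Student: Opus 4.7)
My plan is to prove both characterizations simultaneously by establishing the equality for the $\liminf$ version and noting that the $\limsup$ version follows automatically. For the forward inclusion $\Pi_{t_0}^{-1}K\cap\mU_I \subset \{\ldots\}$, if $z\in\Pi_{t_0}^{-1}K\cap\mU_I$ then condition~(v) of Definition~\ref{def-sol-benard} gives $z(s)\to z(t_0)$ strongly in $H$ as $s\to t_0^+$, hence $|z(s)|_H^2\to |z(t_0)|_H^2$, so the Ces\`aro-type average $V(t,z)$ converges to $|z(t_0)|_H^2=V(t_0,z)$. This simultaneously places $z$ in both the $\liminf$ and $\limsup$ sets, and $z$ trivially belongs to the closure.

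For the reverse inclusion, I take $z\in\overline{\Pi_{t_0}^{-1}K\cap\mU_I}$ with $\liminf_{t\to t_0^+}V(t,z)\leq V(t_0,z)$ and aim to show $z\in\Pi_{t_0}^{-1}K\cap\mU_I$. First, since $\Pi_{t_0}$ is continuous from $\Cloc(I,H_w)$ to $H_w$ and $K$ is weakly closed (being weakly compact in a Hausdorff space), $z(t_0)\in K$. Second, by Lemma~\ref{lem1} and Proposition~\ref{mUIRsharp}, $z\in\mU_I^\sharp(R)$ for $R$ large enough, so $z$ satisfies conditions~(i)--(iv) of Definition~\ref{def-sol-benard} on the interior $\mathring I$. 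The only thing missing is condition~(v), the strong right-continuity at $t_0$.

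To extract (v), I combine weak lower semi-continuity of the $H$-norm with the hypothesis. Since $z\in\Cloc(I,H_w)$,
\[
|z(t_0)|_H^2 \le \liminf_{s\to t_0^+}|z(s)|_H^2 \le \liminf_{t\to t_0^+}\frac{1}{t-t_0}\int_{t_0}^t|z(s)|_H^2\,ds = \liminf_{t\to t_0^+}V(t,z) \le V(t_0,z) = |z(t_0)|_H^2,
\]
so equality holds throughout; in particular $\liminf_{s\to t_0^+}|z(s)|_H^2=|z(t_0)|_H^2$. For the complementary upper bound, I invoke the energy inequalities from Definition~\ref{def-sol-benard}(iv), which (after multiplying the $\theta$-inequality by $\gamma$ and adding) yield, for a.e.\ $t'\in\mathring I$ and every $t>t'$, an estimate of the form $|z(t)|_H^2\le |z(t')|_H^2 + R(t,t',z)$, where $R(t,t',z)$ consists of integrals over $[t',t]$ of the coupling term $(g\alpha\theta\bfe_3,\bu)_1$ and the background-temperature terms involving $T_{b,\varepsilon}$, all of which are controlled by $z\in L^2_{\mathrm{loc}}(I,V)\cap L^\infty_{\mathrm{loc}}(I,H)$ thanks to Proposition~\ref{benard-est}, and therefore $R(t,t',z)\to 0$ as $t,t'\to t_0^+$. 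Choosing a sequence $t_n'\to t_0^+$ of valid times with $|z(t_n')|_H^2\to |z(t_0)|_H^2$ (available from the chain of equalities above), I pass to the limit to conclude $\limsup_{t\to t_0^+}|z(t)|_H^2\le |z(t_0)|_H^2$. Combined with weak lower semi-continuity this gives $|z(t)|_H\to|z(t_0)|_H$, which together with weak continuity upgrades to strong convergence $z(t)\to z(t_0)$ in $H$, establishing (v) and hence $z\in\mU_I$.

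The equivalence of the two displayed sets follows immediately: the $\limsup$-set is contained in the $\liminf$-set (hence in $\Pi_{t_0}^{-1}K\cap\mU_I$ by what we just proved), while every element of $\Pi_{t_0}^{-1}K\cap\mU_I$ lies in the $\limsup$-set by the forward-direction argument (where the full limit of $V(t,z)$ exists). The step I expect to be the most delicate is the extraction of the $\limsup$ bound from the energy inequality: one needs to verify that $R(t,t',z)\to 0$ \emph{uniformly} as $t,t'\to t_0^+$ along a sequence achieving the liminf of $|z(t')|_H^2$, and one must be careful that the energy inequality's ``almost every $t'$'' restriction does not interfere with selecting such a sequence --- this is why the near-equality $\liminf_{s\to t_0^+}|z(s)|_H^2 = |z(t_0)|_H^2$ established in the previous step is essential, since it guarantees that valid times $t'$ arbitrarily close to $t_0$ with $|z(t')|_H$ near $|z(t_0)|_H$ do exist.
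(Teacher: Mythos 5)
Your overall strategy coincides with the paper's: the forward inclusion via the strong right-continuity at $t_0$ of genuine weak solutions, and the reverse inclusion by first placing $z$ in $\mU_I^\sharp(R)$ and then recovering the energy inequalities with $t'=t_0$ so as to deduce strong right-continuity. There is, however, a genuine gap at precisely the step you flag as delicate. You select ``a sequence $t_n'\to t_0^+$ of valid times with $|z(t_n')|_H^2\to|z(t_0)|_H^2$'' and justify its existence by the pointwise identity $\liminf_{s\to t_0^+}|z(s)|_H^2=|z(t_0)|_H^2$. That identity only produces \emph{some} sequence $s_n\to t_0^+$ along which the norm converges; it carries no measure-theoretic information about the set $\{s\,:\,|z(s)|_H^2\le|z(t_0)|_H^2+\varepsilon\}$, which could a priori be Lebesgue-null near $t_0$ (the map $s\mapsto|z(s)|_H^2$ is only lower semi-continuous, so it may dip down to its liminf on a null set of times). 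Since the energy inequalities of Definition \ref{def-sol-benard}(iv) hold only for $t'$ in a set of full measure, nothing guarantees that your sequence can be taken among valid starting times, and the passage to the limit in the energy inequality is not justified as written.

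The repair uses an ingredient you already have in hand, and it is exactly what the paper does: draw the good times from the Ces\`aro hypothesis itself rather than from the pointwise liminf. From $\liminf_{t\to t_0^+}\frac{1}{t-t_0}\int_{t_0}^t\bigl(|z(s)|_H^2-|z(t_0)|_H^2\bigr)\rd s\le 0$ one extracts $t_n\to t_0^+$ with average at most $1/n$, and a Chebyshev-type argument then shows that $I_n=\{s\in(t_0,t_n)\,:\,|z(s)|_H^2-|z(t_0)|_H^2\le 1/n\}$ has \emph{positive} Lebesgue measure (otherwise the integrand would exceed $1/n$ a.e.\ and the average would be strictly larger than $1/n$). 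A positive-measure set must meet the full-measure set of valid times, so one can pick valid $t_n'\in I_n$ satisfying $\limsup_n|z(t_n')|_H^2\le|z(t_0)|_H^2$; with this substitution your limit passage closes. A minor stylistic difference: the paper runs this argument componentwise for $\bu$ and $\theta$, deriving \eqref{eq52} and \eqref{eq53} separately from the hypothesis together with the componentwise weak lower semi-continuity, because the two energy inequalities are stated separately; your summed inequality for $|z|_H^2$ works equally well and is slightly more economical, since the combined energy inequality at $t_0$ already yields $\limsup_{t\to t_0^+}|z(t)|_H^2\le|z(t_0)|_H^2$ and hence strong convergence in the Hilbert space $H$.
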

\begin{proof}
We first prove the characterization with the $\liminf$. Then the second characterization, with the $\limsup$, is readily verified.

Let $K\subset H_w$ be a compact set and let $z\in \Pi^{-1}_{t_0}K\cap \mU_I$.
It is clear that $z\in \overline{\Pi^{-1}_{t_0}K\cap \mU_I}$. Moreover, since
$z\in \mU_I$, from Definition \ref{def-sol-benard} it follows that $z$ is strongly continuous at $t_0$ from the right, which implies that $V(t,z)$ converges to $V(t_0,z)$ as $t\rightarrow t_0^+$. Then, in particular,
\[\liminf_{t\rightarrow t_0^+}V(t,z)\leq V(t_0,z).\]

On the other hand, consider $z\in\overline{\Pi^{-1}_{t_0}K\cap \mU_I}$ satisfying
\be\label{eq44}\liminf_{t\rightarrow t_0^+}V(t,z)\leq V(t_0,z).\ee
Since $\Pi_{t_0}$ is continuous and $K$ is closed we have that
\[\fKUI\subset \overline{\Pi_{t_0}^{-1}K}=\Pi_{t_0}^{-1}K,\]
so that $z\in\Pi_{t_0}^{-1}K$.

Let $R\geq 0$ be sufficiently large such that $K\subset B_H(R)$. Then, as in the proof of Lemma \ref{lem1},
\[\fKUI\subset\mU_I^{\sharp}(R),\]
which implies that $z$ is a weak solution on $\mathring{I}$.
Therefore, in order to show the first characterization, it only remains to prove that $z$ is strongly continuous in $H$ at $t_0$ from the right. Here we follow the proof of \cite[Lemma 2.3]{FRT} for a similar result for the Navier-Stokes equations. First observe that, since $z=(\bu,\theta)\in\Cloc(I,H_w)$, then in particular $\bu\in\Cloc(I,(H_1)_w)$ and $\theta\in\Cloc(I,(H_2)_w)$. Thus,
\[|\bu(t_0)|_1\leq\liminf_{t\rightarrow t_0^+}|\bu(t)|_1\]
and
\[|\theta(t_0)|_2\leq\liminf_{t\rightarrow t_0^+}|\theta(t)|_2,\]
which imply that
\be\label{eq42}\liminf_{t\rightarrow t_0^+}\left(\frac{1}{t-t_0}\int_{t_0}^t|\bu(s)|_1^2\rd s\right)-|\bu(t_0)|_1^2\geq 0\ee
and
\be\label{eq43}\liminf_{t\rightarrow t_0^+}\left(\frac{1}{t-t_0}\int_{t_0}^t|\theta(s)|_2^2\rd s\right)-|\theta(t_0)|_2^2\geq 0.\ee

From \eqref{eq44}, we obtain that
\begin{multline}\label{eq49}
\liminf_{t\rightarrow t_0^+}\left(\frac{1}{t-t_0}\int_{t_0}^t|\bu(s)|_1^2\rd s\right)-|\bu(t_0)|_1^2+\\
+\gamma\liminf_{t\rightarrow t_0^+}\left(\frac{1}{t-t_0}\int_{t_0}^t|\theta(s)|_2^2\rd s\right)-\gamma|\theta(t_0)|_2^2\\
\leq\liminf_{t\rightarrow t_0^+}V(t,z)-V(t_0,z)\leq 0.
\end{multline}

Thus, from \eqref{eq42}, \eqref{eq43} and \eqref{eq49} it follows that
\be\label{eq52}\liminf_{t\rightarrow t_0^+}\frac{1}{t-t_0}\int_{t_0}^t|\bu(s)|_1^2\rd s\leq |\bu(t_0)|_1^2,\ee
and
\be\label{eq53}\liminf_{t\rightarrow t_0^+}\frac{1}{t-t_0}\int_{t_0}^t|\theta(s)|_2^2\rd s\leq |\theta(t_0)|_2^2.\ee
From \eqref{eq52}, there is a sequence $\{t_n\}_n\subset I$ such that
$t_n\rightarrow t_0^+$ and
\[\frac{1}{t_n-t_0}\int_{t_0}^{t_n}(|\bu(s)|_1^2
-|\bu(t_0)|_1^2)\rd s\leq \frac{1}{n}.\]
This implies that for each $n\in \mathbb N$ there exists a subset $I_n\subset (t_0,t_n)$ of positive Lebesgue measure such that
\begin{equation}\label{conv}
|\bu(t'_n)|_1^2-|\bu(t_0)|_1^2\leq \frac{1}{n},
\end{equation}
for all $t'_n\in I_n$.
Thus,
\be\label{eq45}\limsup_{t_n'\rightarrow t_0}|\bu(t_n')|_1^2\leq|\bu(t_0)|_1^2.\ee

Since $z\in\mU_I^\sharp$ and $I_n$ is contained in $\mathring{I}$ and has positive Lebesgue measure, for all $n\in \mathbb N$ we can find $t'_n\in I_n$ such that $z$ satisfies inequality \eqref{benard-ener2} starting at the time $t_n'$, i.e.
\begin{eqnarray*}\frac{1}{2}|\bu(t)|_1^2+\nu\int_{t_n'}^t\|\bu(s)\|_1^2\rd s &\leq& \frac{1}{2}|\bu(t_n')|_1^2+\int_{t_n'}^t(g\alpha \theta(s)\bfe_3,\bu(s))_1\rd s,\nonumber\\
&&+\int_{t_n'}^t(g\alpha  T_{b,\varepsilon}\bfe_3,\bu(s))_1\rd s,
\end{eqnarray*}
for all $t\in I$ with $t>t'_n$.
Taking the $\limsup$ as $t_n'\rightarrow t_0^+$ and using \eqref{eq45} and the fact that $z\in L^2_{\text{loc}}(I, V)\cap L^\infty_{\text{loc}}(I,H)$, we then obtain that
\begin{eqnarray*}
\frac{1}{2}|\bu(t)|_1^2+\nu\int_{t_0}^t\|\bu(s)\|_1^2\rd s&\leq& \frac{1}{2}|\bu(t_0)|_1^2+\int_{t_0}^t(g\alpha \theta(s)\bfe_3,\bu(s))_1\rd s\nonumber\\
&&+\int_{t_0}^t(g\alpha T_{b,\varepsilon}\bfe_3,\bu(s))_1\rd s,
\end{eqnarray*}
for all $t\in I$, with $t>t_0$, which implies that $\bu$ is strongly continuous in $H_1$ at $t_0$ from the right.

By using \eqref{eq53} we can show analogously that $\theta$ satisfies inequality \eqref{benard-ener1} with $t'=t_0$, which implies that $\theta$ is strongly continuous in $H_2$ at $t_0$ from the right. Thus, $z$ is strongly continuous in $H$ at $t_0$ from the right, and this finishes the proof of the characterization of $\KUI$ with the $\liminf$.

Now suppose that $z\in\fKUI$ and $\limsup_{t\rightarrow t_0^+} V(t,z)\leq V(t_0,z)$. Thus,
\[\liminf_{t\rightarrow t_0^+} V(t,z)\leq\limsup_{t\rightarrow t_0^+} V(t,z)\leq V(t_0,z).\]
Then, by the first characterization it follows that $z\in\KUI$.

On the other hand, if $z\in\KUI$ then again $z$ is strongly continuous at $t_0$ from the right, which implies in particular that
\[\limsup_{t\rightarrow t_0^+} V(t,z)\leq V(t_0,z).\]
This proves the characterization with the $\limsup$.
\end{proof}

Finally, the following result proves the remaining condition \eqref{H3vii} of Hypothesis \eqref{H3}. In fact, it proves a stronger condition, with the $\limsup$ in time instead of the $\liminf$, and with the supremum in $z$ over $\mU_I$ instead of just over $\Pi_{t_0}^{-1}K\cap\mU_I$.
\begin{lem}\label{lem4}
Let $V:I\times\Cloc(I,H_w)$ be the function defined in \eqref{eq46} and let $I\subset \mathbb{R}$ be an interval closed and bounded on the left with left end point $t_0$. Then $\mU_I$ satisfies the following property:
\[\limsup_{t\rightarrow t_0^+}\sup_{z\in\mU_I}(V(t,z)-V(t_0,z))\leq 0.\]
\end{lem}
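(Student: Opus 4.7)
My plan is to exploit the uniform-in-time \emph{a priori} estimate \eqref{eq39} of Proposition \ref{benard-est}, applied at the left endpoint $t' = t_0$. The crucial point is that condition (v) of Definition \ref{def-sol-benard} guarantees that every $z \in \mU_I$ is strongly right-continuous at $t_0$, and hence, by the characterization in item (iv), the energy inequalities \eqref{benard-ener2}--\eqref{benard-ener1} are valid with $t' = t_0$. This means that the conclusion of Proposition \ref{benard-est} also applies with $t' = t_0$ for every $z \in \mU_I$.

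First, I would set $M := 2\kappa\gamma L_1 L_2 (T_1 - T_0)^2 / (\eta \lambda_0 \varepsilon)$, so that \eqref{eq39} with $t' = t_0$ reads
\[
|z(s)|_H^2 \leq |z(t_0)|_H^2 \Exp^{-\eta\lambda_0(s-t_0)} + M\bigl(1 - \Exp^{-\eta\lambda_0(s-t_0)}\bigr),
\qquad s > t_0, \; z\in\mU_I.
\]
Subtracting $|z(t_0)|_H^2$ from both sides and factoring yields
\[
|z(s)|_H^2 - |z(t_0)|_H^2 \leq \bigl(1 - \Exp^{-\eta\lambda_0(s-t_0)}\bigr)\bigl(M - |z(t_0)|_H^2\bigr) \leq M\bigl(1 - \Exp^{-\eta\lambda_0(s-t_0)}\bigr),
\]
where in the last inequality I use that when $|z(t_0)|_H^2 > M$ the middle expression is nonpositive and when $|z(t_0)|_H^2 \leq M$ one has $M - |z(t_0)|_H^2 \leq M$. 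The key feature of this bound is that it is \emph{independent of} $z \in \mU_I$, since the initial-energy factor has been absorbed.

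Next I would average this pointwise-in-$s$ bound over $[t_0,t]$ to obtain, for any $t > t_0$ and any $z \in \mU_I$,
\[
V(t,z) - V(t_0,z) = \frac{1}{t-t_0}\int_{t_0}^t \bigl(|z(s)|_H^2 - |z(t_0)|_H^2\bigr)\,\rd s \leq M\left(1 - \frac{1 - \Exp^{-\eta\lambda_0(t-t_0)}}{\eta\lambda_0(t-t_0)}\right).
\]
Taking the supremum over $z \in \mU_I$ on the left and letting $t \to t_0^+$ on the right, the factor $(1 - \Exp^{-\eta\lambda_0(t-t_0)})/(\eta\lambda_0(t-t_0))$ tends to $1$, so the right-hand side tends to $0$, which yields the desired conclusion.

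The only subtle point is justifying that \eqref{eq39} can be applied with $t' = t_0$; this is the step I would write out most carefully, invoking conditions (iv) and (v) of Definition \ref{def-sol-benard}. Everything else is an elementary manipulation of the exponential estimate, and the uniformity in $z$ comes for free once the dependence on $|z(t_0)|_H^2$ has been eliminated via the sign argument above.
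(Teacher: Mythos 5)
Your proof is correct and takes essentially the same route as the paper: both hinge on the observation that condition (v) of Definition \ref{def-sol-benard} (strong right-continuity at $t_0$) makes the energy inequalities, and hence the \emph{a priori} estimates, valid with $t'=t_0$, after which one averages a $z$-independent bound on $|z(s)|_H^2-|z(t_0)|_H^2$ over $[t_0,t]$ and lets $t\to t_0^+$. The only cosmetic difference is that you use the exponential form \eqref{eq39} (requiring your small sign argument to absorb $|z(t_0)|_H^2$), whereas the paper works directly from the energy inequality to get a bound that is linear in $t-t_0$ and uniform in $z$ from the outset.
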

\begin{proof}
Let $z\in \mU_I$. Since $I$ is closed and bounded on the left with left end point $t_0$, it follows from Definition \ref{def-sol-benard} that $z$ is strongly continuous at $t_0$ from the right. This implies, in particular, that inequalities \eqref{benard-ener2} and \eqref{benard-ener1} are valid for $t'=t_0$. Thus, using these inequalities and conditions \eqref{eq47} and \eqref{eq48} on $\gamma$ and $\varepsilon$, it is not difficult to obtain that
\[|z(t)|_H^2+\eta\int_{t_0}^t\|z(s)\|_V^2\rd s\leq |z(t_0)|_H^2+2\kappa\gamma\frac{L_1L_2}{\varepsilon}(T_1-T_0)^2(t-t_0),\]
where $\eta=\min\{\nu,\kappa\}$.

Then, discarding the nonnegative integral term on the left-hand side of this inequality and integrating with respect to the time variable on $[t_0,t]$, we have
\[\int_{t_0}^t(|z(s)|_H^2-|z(t_0)|_H^2)\rd s\leq 2\kappa\gamma\frac{L_1L_2}{\varepsilon}(T_1-T_0)^2\int_{t_0}^t(s-t_0)\rd s.\]
Since the inequality above is valid for all $z\in\mU_I$, using the definition of $V$ it follows that
\[\sup_{z\in\mU_I}(V(t,z)-V(t_0,z))\leq 2\kappa\gamma\frac{L_1L_2}{\varepsilon}(T_1-T_0)^2\frac{1}{t-t_0}\int_{t_0}^t(s-t_0)\rd s.\]
Taking the $\limsup$ as $t\rightarrow t_0^+$, we finally obtain that
\[\limsup_{t\rightarrow t_0^+} \sup_{z\in\mU_I}(V(t,z)-V(t_0,z))\leq 0.\]
\end{proof}

According to the abstract framework developed in Section 3, the results above provide us with the sufficient conditions to prove the existence of a $\mU_I$-trajectory statistical solution of the B\'enard problem \eqref{eq33}-\eqref{eq35} for a given initial data.

\begin{thm}
\label{existenceBenard}
Let $I\subset\mathds{R}$ be an interval closed and bounded on the left with left end point $t_0$ and let $\mU_I$ be the set of weak solutions of problem \eqref{eq33}-\eqref{eq35} on $I$. If $\mu_0$ is a tight Borel probability measure on $H_w$ then there exists a $\mU_I$-trajectory statistical solution $\rho$ on $\Cloc(I,H_w)$ such that $\Pi_{t_0}\rho=\mu_0$.
\end{thm}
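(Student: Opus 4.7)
The plan is to verify that the abstract hypothesis $(H)$ from Definition \ref{hypothesisH} holds for the choice $X = H_w$ and $\mU = \mU_I$, with the specific function $V$ given by \eqref{eq46}, and then invoke Theorem \ref{existence} directly. Since all the technical work has been organized into the preceding lemmas of this section, the proof itself should be essentially a checklist, with each condition cited to the corresponding lemma.

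Concretely, I would proceed as follows. Condition \eqref{H1}, namely $\Pi_{t_0}\mU_I = H_w$, is immediate from Theorem \ref{existweaksolbenard}, which asserts the existence of a weak solution with any prescribed initial datum $z_0\in H$. Condition \eqref{H2} is exactly the statement of Lemma \ref{lem1}. For condition \eqref{H3}, I would take $V$ as in \eqref{eq46} and note that item \eqref{i} of Lemma \ref{lem2}, namely the joint lower semi-continuity of $V$ on $I\times\Cloc(I,H_w)$, implies simultaneously \eqref{H3i}, \eqref{H3ii} and (by restriction) \eqref{H3iv}; item \eqref{ii} of Lemma \ref{lem2} gives \eqref{H3iii}; item \eqref{iii} of Lemma \ref{lem2} gives \eqref{H3v}. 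The double characterization \eqref{H3vi} is exactly Lemma \ref{lem3}, and the limsup estimate of Lemma \ref{lem4} implies in particular the liminf condition \eqref{H3vii} (with the supremum taken only over $\Pi_{t_0}^{-1}K\cap\mU_I$ rather than all of $\mU_I$).

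Once hypothesis $(H)$ has been verified in this way, the conclusion is immediate: given a tight Borel probability measure $\mu_0$ on $H_w$, Theorem \ref{existence} produces a $\mU_I$-trajectory statistical solution $\rho$ on $\Cloc(I,H_w)$ with $\Pi_{t_0}\rho = \mu_0$, which is precisely the statement of Theorem \ref{existenceBenard}.

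The hard part of the argument is therefore not in this final theorem but in the preparatory lemmas, particularly Lemma \ref{lem3}, where one must leverage the strong continuity at $t_0$ encoded in Definition \ref{def-sol-benard}(v) together with the energy inequalities \eqref{benard-ener2}--\eqref{benard-ener1} to recover a genuine weak solution from an element of the closure $\fKUI$. By the time we reach the statement of Theorem \ref{existenceBenard}, the only genuine task is the bookkeeping of matching each hypothesis in Definition \ref{hypothesisH} to the correct lemma, and so the proof can be written in a few lines.
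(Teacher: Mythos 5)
Your proposal is correct and follows exactly the same route as the paper's own proof: verify each condition of hypothesis $(H)$ for $X=H_w$ and $\mU=\mU_I$ by citing Theorem \ref{existweaksolbenard} for \eqref{H1}, Lemma \ref{lem1} for \eqref{H2}, Lemma \ref{lem2} for \eqref{H3i}--\eqref{H3v}, Lemma \ref{lem3} for \eqref{H3vi}, and Lemma \ref{lem4} for \eqref{H3vii}, then apply Theorem \ref{existence}. Your remark that Lemma \ref{lem4} gives a stronger statement than \eqref{H3vii} requires is also consistent with the paper's own commentary.
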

\begin{proof}
As mentioned along this section, we consider $X$ as the Hausdorff topological space $H_w$ and $\mU$ as the set of weak solutions $\mU_I$ defined in \eqref{mUI}. Theorem \ref{existweaksolbenard} proves that $\mU_I$ satisfies \eqref{H1} of Definition \ref{hypothesisH}. Hypothesis \eqref{H2} is proved in Lemma \ref{lem1}. From Lemma \ref{lem2}, $\mU_I$ satisfies items \eqref{H3i} to \eqref{H3v} of \eqref{H3}. Item \eqref{H3vi} is proved in Lemma \ref{lem3} and item \eqref{H3vii} is proved in Lemma \ref{lem4}.

Therefore, $\mU_I$ satisfies hypothesis (H) of Definition \ref{hypothesisH} and, from Theorem \ref{existence}, there exists a $\mU_I$-trajectory statistical solution $\rho$ on $\Cloc(I,H_w)$ satisfying $\Pi_{t_0}\rho=\mu_0$.
\end{proof}

Once a trajectory statistical solution $\rho$ is obtained for the B\'enard problem, as given by Theorem \ref{existenceBenard}, one can show, in a way similar to the proof for the Navier-Stokes equations \cite{FRT2010, FRT}, that the family of projections $\{\Pi_t\rho\}_{t\in I}$ is a statistical solution in the phase space, in the corresponding sense of the B\'enard problem. Our main interest, however, is in developing also an abstract formulation for statistical solutions in phase space, as described after the proof of Theorem \ref{existence}, and that will, of course, include the B\'enard problem. This will be presented elsewhere.

\section*{Acknowledgments}
The authors would like to thank Professors Dinam\'erico Pombo, for enlightening discussions about general topology, and Fabio Ramos, for bringing to our attention the works of Topsoe. The last author, R. Rosa, is also greatly indebted to Professors Roger Temam and Ciprian Foias, for their continued mentoring and support and, in particular, for all he has learned from them on the subject.

\end{document}